\newtheorem{theorem}{Theorem}[section]
\newtheorem{lemma}[theorem]{Lemma}
\newtheorem{prop}[theorem]{Proposition}
\newtheorem{cor}[theorem]{Corollary}
\numberwithin{equation}{section}
\newcommand{\R}{\mathbb{R}}
\newcommand{\Rd}{\mathbb{R}^d}
\newcommand{\nN}{n \in \mathbb{N}}
\newcommand{\N}{\mathbb{N}}
\newcommand{\C}{\mathbb{C}}
\newcommand{\ds}{\displaystyle}
\newcommand{\g}{\mathfrak{g}}
\newcommand{\tr}{\mbox{tr}}
\newcommand{\bean}{\begin{eqnarray*}}
\newcommand{\eean}{\end{eqnarray*}}
\newcommand{\la}{\langle}
\newcommand{\ra}{\rangle}
\newcommand{\Z}{\mathbb{Z}}
\newcommand{\G}{\widehat{G}}
\begin{document}

\date{}

\title{The Positive Maximum Principle on Lie Groups}

\author{David Applebaum, Trang Le Ngan\\ School of Mathematics and Statistics,\\ University of
Sheffield,\\ Hicks Building, Hounsfield Road,\\ Sheffield,
England, S3 7RH\\ ~~~~~~~\\e-mail: D.Applebaum@sheffield.ac.uk, tlengan1@sheffield.ac.uk}

\maketitle

\begin{abstract}
We extend a classical theorem of Courr\`{e}ge to Lie groups in a global setting, thus characterising all linear operators on the space of smooth functions of compact support that satisfy the positive maximum principle. We show that these are L\'{e}vy type operators (with variable characteristics), and pseudo--differential operators when the group is compact. If the characteristics are constant, then the operator is the generator of the contraction semigroup associated to a convolution semigroup of sub--probability measures.

\vspace{5pt}

\noindent  {\it Key words and phrases}~Lie group, Lie algebra, distribution, positive maximum principle, L\'{e}vy measure, L\'{e}vy kernel, Feller semigroup, convolution semigroup, unitary representation, pseudo--differential operator, symbol.

\vspace{5pt}

\begin{center}

\noindent {\it MSC 2010: 47G20, 47D07, 22E30, 47G30, 60B15}

\end{center}

\vspace{5pt}

\begin{center}

{\it We dedicate this paper to the memory of Herbert Heyer 1936--2018.}

\end{center}

\end{abstract}

\section{Introduction}

In 1965/6, Courr\`{e}ge \cite{Courr}, building on work of von Waldenfels \cite{Wal}, classified all linear operators in Euclidean space that satisfy the positive maximum principle(PMP). He first showed that these were what we now call {\it L\'{e}vy type operators}, in that they are the sum of a second order elliptic operator and a non--local term expressed as integration against a kernel. Secondly he showed that all such operators may be realised as pseudo--differential operators, but because of the non-local term, the symbol does not in general have the smoothness properties that are typically encountered when meeting pseudo--differential operators, say in the study of pde's, or in index theory. Nonetheless, in recent years  Courr\`{e}ge's theorem has found a great deal of application in the study of Feller--Markov processes, due mainly to the work of N.Jacob, R.Schilling and their collaborators (see e.g. \cite{Jac1, Jac4, BSW}). The key observation is that the generator of a Feller--Markov semigroup having a sufficiently rich domain satisfies the PMP, and so will be both a L\'{e}vy type operator, and a pseudo--differential operator, whose symbol is a rich source of probabilistic information about the behavior of the process. The aim of the current paper is to extend this theory to Lie groups.

In fact, Courr\`{e}ge \cite{Courr1} has partially extended his results to obtain the form of generators of Feller--Markov processes on compact manifolds, and these were further extended to more general manifolds in \cite{BCP}; but the results are given in terms of local co--ordinates, and are not presented in a form that is easy to use. Our approach is more powerful, at least in the Lie group context, in that we are able to classify all operators satisfying the PMP, and our results are global, with all differential operators being expressed in terms of a Lie algebra basis. A particular class of Feller--Markov processes of particular importance are the L\'{e}vy processes \cite{Liao}, and these are essentially in one--to--one correspondence with convolution semigroups of probability measures on $G$. The associated semigroup generator was classified by Hunt \cite{Hu} in 1956. The Hunt generator, as would be expected (see e.g. section 5 of \cite{App2}), has essentially the same structure as the L\'{e}vy--type operators that we obtain, but the functions and kernels that parameterise the linear operator are constant in this special case.

Of course pseudo--differential operators have been considered on manifolds since the theory was first developed; but in a local sense using local co--ordinates. In  Chapter 10 of \cite{RT} Ruzhansky and Turunen have created a global theory of pseudo--differential operators on compact Lie groups, making use of Peter--Weyl theory (see also \cite{RTW} for further developments). So the symbol of the operator, in that sense, is a matrix and not a scalar. This theory was applied to Hunt semigroups by one of us in \cite{App3} and some applications to Feller semigroups were developed in \cite{App2}. In the present paper we complete the programme of extending the results of \cite{Courr} to (compact) Lie groups, by showing that all linear operators that satisfy the PMP may be expressed as pseudo--differential operators within a suitable generalisation of the Ruzhansky--Turunen theory. As we might expect, the symbol has the form of a generalised characteristic exponent as occurs in the L\'{e}vy--Khintchine formula for convolution semigroups (see section 5.5. in \cite{App1}, pp.144--6), but with variable characteristics.

We do not follow Courr\`{e}ge's original approach in characterising linear operators satisfying the PMP. Instead we adopt the very slick approach that is given in  Hoh \cite{Hoh} (see p.47 of \cite{BSW} for further historic references), where the problem is reduced to studying associated linear functionals that turn out to be distributions of order $2$. This requires us to make use of a global theory of distributions on Lie groups, as developed by Ehrenpreis \cite{Ehr}. The drawback of Hoh's approach is that it doesn't fit naturally into the framework of the Banach space comprising continuous functions that vanish at infinity, which is natural for studying Feller semigroups. However we are able to find sufficient conditions for the operator of interest to act in that context.

For Euclidean spaces, Courr\`{e}ge's results have led to significant progress in the study of Feller processes, not least through the solution of the martingale problem (see \cite{Hoh, Hoh1} and Chapter 4 of \cite{Jac4}). For other results in this area see the survey \cite{BSW}. Our expectation is that the work of the current paper will open up the possibility of pursuing a similar programme for processes on Lie groups.

The organisation of the paper is as follows. We first give  a short account of distributions on Lie groups, which is sufficient for our purposes. Then we prove the generalised Courr\`{e}ge theorem. Next we examine Hunt's theorem from within this context. Finally we specialise to compact Lie groups and obtain the pseudo--differential operator representation. More detailed accounts of some of the results herein can be found in \cite{Lili}. A forthcoming paper will deal with extensions of many of these results to symmetric spaces \cite{AT}. For more general manifolds, the best results currently available are in \cite{BCP}.

\vspace{5pt}

{\bf Notation.} Throughout this paper, $G$ is a Lie group having neutral element $e$, dimension $d$ and Lie algebra $\g$. The exponential map from $\g$ to $G$ will be denoted as $\exp$. We denote by ${\mathcal F}(G)$, the space of all real--valued functions on $G$, $C_{0}(G)$ the Banach space (with respect to the supremum norm $||\cdot||_{\infty}$) of all real--valued, continuous functions on $G$ that vanish at infinity, and $C_{c}^{\infty}(G)$ the dense linear manifold in $C_{0}(G)$ of smooth functions with compact support. The Borel $\sigma$--algebra of $G$ is denoted as ${\mathcal B}(G)$. For each $g \in G, l_{g}$ is the operation of left translation by $G$ on itself defined by $l_{g}(h) = gh$. The pullback to $C_{0}(G)$ is denoted $L_{g}$, so for all $g \in G, f \in C_{0}(G), L_{g}f = f \circ l_{g}$. Note that $l_{g}$ is a diffeomorphism and $L_{g}$ is a Banach space (linear) automorphism.

\noindent The trace of a real or complex $d \times d$ matrix $A$ is written tr$(A)$. The space of all complex $d \times d$ matrices is denoted by $M_{d}(\C)$.

\section{Distributions on Lie Groups}

We will need some facts about distributions on Lie groups. Of course Lie groups are $C^{\infty}$--manifolds, and the standard approach to defining distributions therein is to make use of local co--ordinate systems (see e.g. section 6.3 of \cite{Hor}). We wish to employ the global structure on the Lie group, and in this respect we follow Ehrenpreis \cite{Ehr}. We do not seek to do more in this direction than develop the few tools that we will need for our purposes.

We fix once and for all a basis $\{X_{1}, \ldots, X_{d}\}$ of $\g$. Let $\alpha = (\alpha_{1}, \ldots, \alpha_{N})$ be a multi--index where $\alpha_{i} \in \Z_{+}$, for $i = 1, \ldots N$, and define $|\alpha| = \alpha_{1} + \cdots + \alpha_{N}$. We are going to introduce the (seemingly ambiguous) notation $X^{\alpha}$, with the understanding that we will only employ it when we sum over a finite subset of $\alpha$'s, and that the sum will contain all such distinct non--commuting (in general) combinations of products of the $X_{i}$'s (acting as differential operators on $C^{\infty}(G)$). So for example, for all $f \in C^{\infty}(G)$, in the case $d = 2$,
$$ \sum_{|\alpha| \leq 2}X^{\alpha}f = f + X_{1}f + X_{2}f + X_{1}^{2}f + X_{1}X_{2}f + X_{2}X_{1}f + X_{2}^{2}f.$$

A {\it distribution} $P$ on $G$ is a real--valued linear functional defined on $C_{c}^{\infty}(G)$ such that for every compact set $K$ contained in $G$, there exists $k \in \Z_{+}, C > 0$ so that for all $f \in C_{c}^{\infty}(K)$,
\begin{equation} \label{dist1}
|Pf| \leq C \sum_{|\alpha| \leq k}||X^{\alpha}f||_{\infty},
\end{equation}
where the sum on the right hand side of (\ref{dist1}) is a convenient shorthand for
$$ ||f||_{\infty} + \sum_{i=1}^{d}||X_{i}f||_{\infty} + \cdots + \sum_{i_{1}, i_{2}, \ldots, i_{k} = 1}^{d}||X_{i_{1}}X_{i_{2}} \cdots X_{i_{k}}f||_{\infty}.$$

We say that $P$ is of {\it order $k$} if the same $k$ may be used in (\ref{dist1}) for all compact $K \subseteq G$.  We say that a distribution is {\it positive} if $f \in C_{c}^{\infty}(G)$ and $f \geq 0$ implies $Pf \geq 0$.

\begin{prop} \label{dist2}
Any positive distribution on $G$ is induced by a regular Borel measure $\mu$ in the sense that
$$ Pf = \int_{G}f d\mu,$$
for all $f \in C_{c}^{\infty}(G)$.
\end{prop}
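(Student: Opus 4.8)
The plan is to reduce the statement to the Riesz representation theorem for positive linear functionals on $C_{c}(G)$, the space of all real--valued continuous functions of compact support. Since $G$ is a locally compact Hausdorff space, that theorem will supply a unique regular Borel measure $\mu$ representing such a functional. The only genuine obstruction is that $P$ is a priori defined only on the smaller space $C_{c}^{\infty}(G)$, so the substance of the argument is to extend $P$ to a \emph{positive} linear functional on all of $C_{c}(G)$.

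First I would show that positivity forces a local order--zero estimate. Fix a compact $K \subseteq G$ and choose $\chi \in C_{c}^{\infty}(G)$ with $\chi \geq 0$ and $\chi \equiv 1$ on a neighbourhood of $K$. For any $f \in C_{c}^{\infty}(K)$ the functions $\|f\|_{\infty}\,\chi \pm f$ are nonnegative, so positivity of $P$ gives $P(\|f\|_{\infty}\,\chi \pm f) \geq 0$, whence
$$ |Pf| \leq (P\chi)\,\|f\|_{\infty}. $$
Thus on each $C_{c}^{\infty}(K)$ the functional $P$ is continuous for the supremum norm, with constant $C_{K} := P\chi$.

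Next I would extend $P$ to $C_{c}(G)$. Given $f \in C_{c}(G)$ with $\supp(f) \subseteq K$, choose a compact $K'$ with $K$ in its interior and mollify: convolving $f$ with a nonnegative smooth approximate identity $\rho_{\epsilon}$ supported in a shrinking neighbourhood of $e$ produces $f_{\epsilon} \in C_{c}^{\infty}(K')$ with $\supp(f_{\epsilon}) \subseteq K'$ for $\epsilon$ small, and with $f_{\epsilon} \to f$ uniformly (using the uniform continuity of the compactly supported $f$). By the order--zero estimate applied on $K'$, the net $(Pf_{\epsilon})$ is Cauchy, so $\tilde{P}f := \lim_{\epsilon}Pf_{\epsilon}$ exists; the bound $|Pf_{\epsilon} - Pg_{\delta}| \leq C_{K'}\|f_{\epsilon} - g_{\delta}\|_{\infty}$ shows the limit is independent of the mollifying net, and linearity is inherited. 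Crucially, positivity is preserved: if $f \geq 0$ then each $f_{\epsilon} \geq 0$ (the mollifier being nonnegative), so $Pf_{\epsilon} \geq 0$ and hence $\tilde{P}f \geq 0$.

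Finally, $\tilde{P}$ is a positive linear functional on $C_{c}(G)$, so the Riesz representation theorem yields a unique regular Borel measure $\mu$ on $(G, \mathcal{B}(G))$ with $\tilde{P}f = \int_{G} f\, d\mu$ for all $f \in C_{c}(G)$; restricting to $C_{c}^{\infty}(G)$ gives the claim. I expect the main technical obstacle to be the mollification step: one must verify that convolution against the approximate identity on $G$ returns a smooth, compactly supported function that converges uniformly to $f$ with support controlled inside the fixed $K'$, and that nonnegativity is preserved. These facts are standard consequences of the properties of convolution on a Lie group, but they require the approximate identity to be chosen with care (nonnegative, smooth, and supported in neighbourhoods of $e$ shrinking to $\{e\}$).
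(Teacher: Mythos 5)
Your proposal is correct and follows essentially the same route as the paper, which simply cites the argument of Theorem 2.1.7 in H\"{o}rmander (the $\|f\|_{\infty}\chi \pm f \geq 0$ trick giving the order--zero estimate) followed by the Riesz representation theorem. Your write--up merely fills in the details the paper leaves implicit, in particular the mollification step extending $P$ to a positive functional on $C_{c}(G)$, and those details are sound.
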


 \begin{proof} This is proved in the same way as the corresponding result on Euclidean space. We first imitate the proof of Theorem 2.1.7 in \cite{Hor} to show that any positive distribution is of order zero, and then apply the Riesz representation theorem.
 \end{proof}

The {\it support} supp$(P)$ of a distribution $P$ is the set of all points in $G$ which have no open neighbourhood on which the restriction of $P$ vanishes. Let $U$ be an (open) canonical co--ordinate neighbourhood of $e$ (see e.g. Definition 2 in \cite{Chev} p.118) with co--ordinate functions $x_{1}, \ldots, x_{d}$, where we can, and will assume that $x_{i} \in C_{c}^{\infty}(G)$ for $i = 1, \ldots, d$. We denote by $\phi$, the mapping
$$ g \in U \rightarrow (x_{1}(g), \ldots, x_{d}(g)) \in \Rd.$$
Then $\phi$ is a homeomorphism from $U$ to an open neighbourhood $\widetilde{U}$ of $\Rd$ with $\phi(e) = 0$. For each $f \in C^{\infty}(U)$, we write $J_{\phi}f = f \circ \phi^{-1}$. So $J_{\phi}$ is a linear isomorphism between  $C^{\infty}(U)$ and $C^{\infty}(\widetilde{U})$. If we identify the vector field $X_{i}$ with its restriction to $U$, we have that there exists a vector field $\widetilde{X_{i}} = \sum_{j=1}^{d}c_{ij}(\cdot)\partial_{j}$ on $\widetilde{U}$, such that $\widetilde{X_{i}} = J_{\phi}X_{i}J_{\phi}^{-1}$ for all $i = 1, \ldots d$, where $c_{ij} \in C^{\infty}(\widetilde{U})$ with $c_{ij}(0) = \delta_{ij}$.

\begin{theorem} \label{dist3}
If $P$ is a distribution of order $k$ having support $\{e\}$, then it has the form
$$ Pf = \sum_{|\alpha| \leq k}c_{\alpha}X^{\alpha}f(e),$$
for all $f \in C_{c}^{\infty}(G)$, where $c_{\alpha} \in \R$ for each multi--index $\alpha$.
\end{theorem}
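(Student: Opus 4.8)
The plan is to transport the problem to Euclidean space through the chart $\phi$, apply the classical structure theorem for distributions supported at a point, and then translate the Euclidean derivatives back into the differential operators $X^{\alpha}$.

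First I would localise. Since $\supp(P) = \{e\} \subseteq U$, choose $\psi \in C_{c}^{\infty}(U)$ with $\psi \equiv 1$ on a neighbourhood of $e$. For any $f \in C_{c}^{\infty}(G)$ the function $f - \psi f$ vanishes near $\supp(P)$, so $Pf = P(\psi f)$ and we may assume $f$ is supported in $U$. Pushing forward through $\phi$, define $\widetilde{P}g := P(J_{\phi}^{-1}g)$ for $g \in C_{c}^{\infty}(\widetilde{U})$. I claim $\widetilde{P}$ is a distribution on $\widetilde{U}$ of order $k$ with support $\{0\}$: the support statement is immediate since $\phi(e) = 0$ and $\phi$ is a homeomorphism, while the order-$k$ estimate follows from (\ref{dist1}) by using $\widetilde{X_{i}} = J_{\phi}X_{i}J_{\phi}^{-1}$ together with $\widetilde{X_{i}} = \sum_{j}c_{ij}\partial_{j}$ to bound each $\|X^{\alpha}(J_{\phi}^{-1}g)\|_{\infty}$ by a finite sum of the $\|\partial^{\beta}g\|_{\infty}$ with $|\beta| \leq |\alpha| \leq k$.

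Next I would invoke the Euclidean result (\cite{Hor}, Theorem 2.3.4): a distribution of order $k$ on a neighbourhood of $0 \in \Rd$ with support $\{0\}$ has the form $\widetilde{P}g = \sum_{|\beta| \leq k}a_{\beta}(\partial^{\beta}g)(0)$ for real constants $a_{\beta}$. If one prefers a self-contained argument, the heart of it is the lemma that $\widetilde{P}g = 0$ whenever $\partial^{\beta}g(0) = 0$ for all $|\beta| \leq k$; this is proved by Taylor-expanding $g$ to order $k$, inserting the cutoff $\chi(\cdot/\varepsilon)$, and using the order-$k$ bound on the remainder as $\varepsilon \to 0$. Once this is known, $\widetilde{P}$ factors through the finite-dimensional space of $k$-jets at $0$ and is therefore a linear combination of the jet coordinates $g \mapsto \partial^{\beta}g(0)$.

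Finally I would convert back. Writing $g = J_{\phi}f$, so that $Pf = \widetilde{P}(J_{\phi}f) = \sum_{|\beta| \leq k}a_{\beta}\partial^{\beta}(J_{\phi}f)(0)$, it remains to express each $\partial^{\beta}(J_{\phi}f)(0)$ through the quantities $X^{\alpha}f(e)$. Because $J_{\phi}X_{i}J_{\phi}^{-1} = \widetilde{X_{i}} = \sum_{j}c_{ij}\partial_{j}$ with $c_{ij}(0) = \delta_{ij}$, the $J_{\phi}$'s telescope along a product, so $X^{\alpha}f(e) = (\widetilde{X}^{\alpha}J_{\phi}f)(0)$ is a linear combination of the $\partial^{\beta}(J_{\phi}f)(0)$ with $|\beta| \leq |\alpha|$, whose leading ($|\beta| = |\alpha|$) contribution has coefficient governed by products of $c_{ij}(0) = \delta_{ij}$. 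Thus the linear map sending $(\partial^{\beta}(J_{\phi}f)(0))_{|\beta| \leq k}$ to $(X^{\alpha}f(e))_{|\alpha| \leq k}$ is triangular with respect to the grading by differential order, with identity on the diagonal, hence invertible. Inverting and substituting yields $Pf = \sum_{|\alpha| \leq k}c_{\alpha}X^{\alpha}f(e)$ with real constants $c_{\alpha}$.

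I expect the main obstacle to be precisely this last change of variables: one must track the non-commuting products and the fact that, in the convention fixed earlier, $X^{\alpha}$ ranges over all distinct ordered words in the $X_{i}$, so the passage between $\{\partial^{\beta}\}$ and $\{X^{\alpha}\}$ must be shown to be invertible on each graded piece rather than merely order by order in an ad hoc way. The invertibility rests entirely on the normalisation $c_{ij}(0) = \delta_{ij}$, which makes the transition unipotent with respect to the order filtration; everything else is routine.
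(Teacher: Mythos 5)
Your proposal follows essentially the same route as the paper's own proof: transport $P$ through the chart $\phi$ to a distribution $\widetilde{P}$ on a neighbourhood of $0$ in $\Rd$ using the conjugation $\widetilde{X_{i}} = J_{\phi}X_{i}J_{\phi}^{-1}$, invoke H\"{o}rmander's Theorem 2.3.4 for point-supported distributions, and convert the resulting $\partial^{\beta}$-expansion back into the $X^{\alpha}$'s. Your treatment is in fact slightly more careful than the paper's at the two places it is terse --- the cutoff argument reducing to $f$ supported in $U$, and the inductive (unipotent, order-filtration-preserving) inversion between the families $\{\partial^{\beta}(J_{\phi}f)(0)\}$ and $\{X^{\alpha}f(e)\}$ resting on $c_{ij}(0)=\delta_{ij}$ --- so the proposal is correct.
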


\begin{proof} If supp$(P) = \{e\}$, then $P$ fails to vanish on every co--ordinate neighbourhood of $e$. So in particular it fails to vanish on $U$. If $f \in C_{c}^{\infty}(U)$, we write $\widetilde{f} = J_{\phi}f \in C_{c}^{\infty}(\widetilde{U})$. We define a linear functional $\widetilde{P}: C_{c}^{\infty}(\widetilde{U}) \rightarrow \R$, by $\widetilde{P}\widetilde{f} = Pf$. We then have
\bean |\widetilde{P}\widetilde{f}| & = & |Pf| \\
& \leq & C \sum_{|\alpha| \leq k}||X^{\alpha}f||_{\infty, G}\\
& = & C \sum_{|\alpha| \leq k}||J_{\phi}X^{\alpha}f||_{\infty, \Rd}\\
&= & C \sum_{|\alpha| \leq k}||J_{\phi}J_{\phi}^{-1}{\widetilde X}^{\alpha}J_{\phi}f||_{\infty, \Rd}\\
& \leq & C^{\prime}\sum_{|\alpha| \leq k}||{\partial}^{\alpha}\widetilde{f}||_{\infty, \Rd}. \eean
From this we see that $\widetilde{P}$ may be extended to a distribution on $\Rd$ having support $\{0\}$. Then by Theorem 2.3.4 in \cite{Hor}, we have
$$ \widetilde{P}\widetilde{f} = \sum_{|\alpha| \leq k}d_{\alpha}\partial^{\alpha}\widetilde{f}(0),$$
for $d_{\alpha} \in \R, |\alpha| \leq k$. From this it follows that
$$ Pf = \sum_{|\alpha| \leq k}c_{\alpha}X^{\alpha}f(e),$$
as required.
\end{proof}

\section{The Positive Maximum Principle and a Generalised Courr\`{e}ge Theorem}

 A linear mapping $A: C_{c}^{\infty}(G) \rightarrow {\mathcal F}(G)$ satisfies the {\it positive maximum principle} (PMP) if $f \in C_{c}^{\infty}(G)$ and $f(\rho) = \sup_{\tau \in G}f(\tau) \geq 0 \Rightarrow Af(\rho) \leq 0$.

In this section we seek to give a global characterisation of all linear operators that satisfy the positive maximum principle (PMP). We will follow the approach developed on Euclidean space by Walter Hoh in his Habilitationschrift \cite{Hoh}. First we need to extend the PMP to real--valued linear functionals $T$ defined on the linear space $C_{c}^{\infty}(G)$. We say that $T$ satisfies the {\it positive maximum principle} (PMP), if whenever $f \in C_{c}^{\infty}(G)$ with $f(e) = \sup_{\tau \in G}f(\tau) \geq 0$ then $Tf \leq 0$. The linear functional $T$ is said to be {\it almost positive} if whenever $f \in C_{c}^{\infty}(G)$ with $f \geq 0$ and $f(e) = 0$ then $Tf \geq 0$. It is easily verified that if $T$ satisfies the PMP, then it is almost positive.

Next we explore the relationship between linear operators and associated linear functionals satisfying the positive maximum principle.
For any linear operator $A: C^\infty_c(G) \to {\mathcal F}(G)$, we define a family of linear functionals $A_g: C^\infty_c(G) \to \R$, $g\in G$ by
\begin{equation}\label{Ag}
A_g\varphi : = (L_gAL_{g^{-1}})\varphi(e) = A(L_{g^{-1}} \varphi)(g).
\end{equation}

\begin{lemma} \label{AgPMP}
	A linear operator $A:C^\infty_c(G) \to {\mathcal F}(G)$ satisfies the positive maximum principle if and only if for all $g\in G$ the functional $A_g$ satisfies the positive maximum principle.
\end{lemma}

\begin{proof}
	For necessity, let $\varphi\in C^\infty_c(G)$ be such that $\varphi(e) = \sup_{\sigma\in G} \varphi(\sigma)\geq 0$. Then for any $g\in G$, $L_{g^{-1}}\varphi\in C^\infty_c(G)$ with
	$ L_{g^{-1}}\varphi(g) = \sup_{\sigma\in G} L_{g^{-1}}\varphi(\sigma)\geq 0$. Now, since $A$ satisfies the PMP we get $A_g \varphi = AL_{g^{-1}}\varphi(g) \leq 0.$
	
	For sufficiency, let $\varphi\in C^\infty_c(G)$ be such that $\displaystyle \varphi(g)=\sup_{\sigma\in G}\varphi(\sigma)$ for some $g\in G$. Then $L_g\varphi$ satisfies $L_g\varphi(e) = \sup_{\sigma\in G} L_g\varphi(\sigma)$. The functional $A_g$ satisfies the PMP, thus we have
	$$
	A\varphi(g) = (L_g A L_{g^{-1}} L_g) \varphi(e) = A_g(L_g\varphi) \leq 0.
	$$
\end{proof}

The following result is an easy consequence of elementary calculus.

\begin{lemma}\label{localmax}
	Any function $\varphi\in C^\infty_c(G)$ with local maximum $\displaystyle \varphi(\sigma) = \sup_{g\in V} \varphi(g)$ for some $\sigma\in G$ on a neighbourhood $V$ of $\sigma$, satisfies $X\varphi(\sigma) = 0$ for all $X\in \g$.
\end{lemma}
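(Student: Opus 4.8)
The plan is to reduce the statement to a one--dimensional calculus fact by restricting $\varphi$ to the integral curve of the left--invariant vector field $X$ through $\sigma$. Fix $X \in \g$ and consider the smooth curve $c: \R \to G$ given by $c(t) = \sigma \exp(tX)$, which satisfies $c(0) = \sigma$. The starting observation is that, by the very definition of $X$ as a left--invariant vector field acting as a derivation on $C^\infty(G)$, we have $X\varphi(\sigma) = \frac{d}{dt}\big|_{t=0}\varphi(\sigma\exp(tX))$. Thus, writing $h(t) := \varphi(c(t)) = (\varphi \circ c)(t)$, the quantity we wish to show vanishes is precisely $h'(0)$.

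Next I would verify that $t = 0$ is an interior local maximum of the smooth real--valued function $h$. Since $t \mapsto \sigma\exp(tX)$ is continuous and sends $0$ to $\sigma$, there exists $\varepsilon > 0$ such that $c(t) \in V$ for all $t \in (-\varepsilon, \varepsilon)$. For such $t$ the hypothesis $\varphi(\sigma) = \sup_{g \in V}\varphi(g)$ gives $h(t) = \varphi(c(t)) \leq \varphi(\sigma) = h(0)$, so $h$ attains a local maximum at the interior point $t = 0$ of its domain. As $h$ is a composition of smooth maps and hence differentiable at $0$, the elementary interior--extremum theorem (Fermat's theorem) yields $h'(0) = 0$, that is, $X\varphi(\sigma) = 0$. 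Since $X \in \g$ was arbitrary, the conclusion follows.

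I do not anticipate any genuine obstacle here, which is consistent with the remark that this is ``an easy consequence of elementary calculus.'' The only two points requiring a word of care are, first, the identification of $X\varphi(\sigma)$ with the derivative $h'(0)$ along the curve $c$, which is immediate from the standard description of a Lie algebra element as a left--invariant vector field; and second, ensuring that $c(t)$ genuinely remains inside the neighbourhood $V$ for all sufficiently small $t$, so that $t = 0$ is an \emph{interior} local maximum of $h$ rather than a boundary point, since Fermat's theorem fails at boundary extrema. Both are handled by the continuity of $t \mapsto \sigma\exp(tX)$ at $t = 0$.
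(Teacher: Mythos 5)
Your proof is correct, and it is precisely the ``easy consequence of elementary calculus'' that the paper invokes without writing out: the paper gives no explicit argument, and your reduction along the curve $t \mapsto \sigma\exp(tX)$ (using the standard identification $X\varphi(\sigma) = \frac{d}{dt}\big|_{t=0}\varphi(\sigma\exp(tX))$, consistent with the paper's own usage, e.g.\ in Proposition \ref{PDo1}) followed by Fermat's interior--extremum theorem is exactly the intended argument. Your care in checking that $c(t)$ stays in $V$ for small $t$, so that $t=0$ is an interior maximum, is the right point to flag.
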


The next result is crucial.

\begin{theorem} \label{order2}
	Let $T:C_c^\infty(G)\to \R$ be an almost positive functional, then $T$ is a distribution of order 2.
\end{theorem}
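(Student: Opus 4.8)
The plan is to prove that an almost positive functional $T$ is a distribution of order $2$. Concretely, I must produce, for each compact $K \subseteq G$, a constant $C > 0$ such that $|Tf| \leq C\sum_{|\alpha| \leq 2}\|X^{\alpha}f\|_{\infty}$ for all $f \in C_{c}^{\infty}(K)$, with the same $k = 2$ working uniformly in $K$. The standard Euclidean strategy (as in Hoh) is to bound $|Tf|$ by sandwiching $f$ between functions whose values and first two derivatives at $e$ are controlled, exploiting almost positivity as a squeezing device. So the heart of the argument is a local comparison near $e$, transferred to all of $G$ by the structure of almost positivity.

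First I would reduce everything to behaviour near the neutral element $e$, since almost positivity is a condition keyed to $e$. The idea is: given $f \in C_{c}^{\infty}(K)$, I want to bound $Tf$ above and below by applying $T$ to auxiliary nonnegative functions vanishing at $e$. Fix a canonical coordinate neighbourhood $U$ of $e$ with coordinate functions $x_{1}, \dots, x_{d} \in C_{c}^{\infty}(G)$ as set up before Theorem~\ref{dist3}, and build a fixed test function $\psi \in C_{c}^{\infty}(G)$ with $\psi \geq 0$, $\psi(e) = 0$, and $\psi$ strictly positive off a small neighbourhood of $e$ while behaving like $|x|^{2} = \sum_{i} x_{i}^{2}$ near $e$ (a Taylor-type majorant). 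Then for a constant $\lambda$ chosen in terms of the sup-norms of $f$ and its first two derivatives in the coordinate frame, the functions $g_{\pm} = \lambda\psi \pm (f - f(e) - \sum_{i}(X_{i}f)(e)\,x_{i})$ can be arranged to be nonnegative and to vanish at $e$. Applying almost positivity to $g_{\pm}$ yields two-sided bounds on $T(f - f(e) - \sum_{i}(X_{i}f)(e)\,x_{i})$, hence on $Tf$ itself, controlled by $\|f\|_{\infty}$, the $\|X_{i}f\|_{\infty}$, and the second-order quantities $\|X_{i}X_{j}f\|_{\infty}$ through the $|x|^{2}$ behaviour of $\psi$.

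The main technical obstacle will be the \emph{second-order} nature of the estimate: showing that the positive majorant $\lambda\psi$ can dominate the remainder $f - f(e) - \sum_{i}(X_{i}f)(e)\,x_{i}$ with $\lambda$ depending only on second derivatives of $f$. This requires a uniform Taylor-with-remainder estimate in canonical coordinates, saying that near $e$ this remainder is bounded by $C\,|x|^{2}\sup_{|\alpha| \leq 2}\|X^{\alpha}f\|_{\infty}$, which I would obtain by transporting $f$ to $\widetilde{U} \subseteq \R^{d}$ via $J_{\phi}$, using the relation $\widetilde{X_{i}} = \sum_{j} c_{ij}(\cdot)\partial_{j}$ with $c_{ij}(0) = \delta_{ij}$ to compare the Lie-algebra derivatives $X^{\alpha}f$ at $e$ with the Euclidean derivatives $\partial^{\alpha}\widetilde{f}$ at $0$, and then invoking the ordinary second-order Taylor theorem on $\R^{d}$. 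The care needed is that $\psi$ and all comparison constants must be fixed independently of the particular $f$, so that the bound is genuinely of the required form.

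Finally, I would address uniformity in $K$. On a fixed neighbourhood of $e$ the estimate above is already uniform because $\psi$ and the coordinate machinery are fixed. Away from $e$, I would choose $\psi$ strictly bounded below by some $\delta > 0$ on the (compact) support region outside a small ball around $e$; on that region the inequality $g_{\pm} \geq 0$ is automatic once $\lambda\delta$ dominates $2\|f\|_{\infty}$ plus the linear correction, which again costs only a controlled multiple of $\|f\|_{\infty} + \sum_{i}\|X_{i}f\|_{\infty}$. Collecting the contributions, and noting that $\sum_{i} (X_{i}f)(e)\,T x_{i}$ is bounded by $(\sum_{i}\|X_{i}f\|_{\infty})\max_{i}|Tx_{i}|$ since each $x_{i} \in C_{c}^{\infty}(G)$ is a fixed function to which $T$ may be applied, I would assemble a single constant $C$ giving $|Tf| \leq C\sum_{|\alpha| \leq 2}\|X^{\alpha}f\|_{\infty}$ valid for all $f \in C_{c}^{\infty}(G)$, with $k = 2$ independent of $K$, establishing that $T$ is a distribution of order $2$.
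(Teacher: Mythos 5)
Your overall strategy --- squeeze a Taylor-corrected version of $f$ between $\pm\lambda\psi$ for a quadratic-type majorant $\psi$ vanishing at $e$, and convert almost positivity into the order-2 bound --- is the same mechanism the paper uses. But as written your argument has a genuine gap on non-compact $G$ (and the theorem is stated for general Lie groups; compactness enters only in Section 5): the correction you subtract contains the \emph{constant} term $f(e)$, and constants are not compactly supported. Consequently $g_{\pm} = \lambda\psi \pm \left(f - f(e) - \sum_{i}(X_{i}f)(e)\,x_{i}\right)$ does not lie in $C_{c}^{\infty}(G)$, and, worse, outside the union of the supports of $\psi$, $f$ and the $x_{i}$ one has $g_{\pm} = \mp f(e)$, so whenever $f(e) \neq 0$ one of the two functions is strictly negative far from $e$. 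Almost positivity applies only to non-negative functions in $C_{c}^{\infty}(G)$ vanishing at $e$, so it cannot be invoked on $g_{\pm}$; the same problem recurs in your final assembly, where recovering $Tf$ from $T$ of the corrected function would require the undefined quantity $T(f(e)\mathbbm{1}_G)$. Your requirement that $\psi \in C_{c}^{\infty}(G)$ be ``strictly positive off a small neighbourhood of $e$'' is symptomatic of the same oversight: no such compactly supported function exists unless $G$ is compact.

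The gap is repairable, and the repair highlights the real difference from the paper. Multiplying the constant by a cutoff $\chi \in C_{c}^{\infty}(G)$ equal to $1$ on a neighbourhood of $e$, i.e. working with $R = f - f(e)\chi - \sum_{i}(X_{i}f)(e)\,x_{i}$, restores compact support, keeps $R$ and its first derivatives vanishing at $e$, and lets your Taylor estimate near $e$ and crude sup-norm estimate away from $e$ go through, at the harmless cost of an extra term $\|f\|_{\infty}|T\chi|$ in the final bound; this would give a legitimate single-base-point variant of the argument. The paper instead avoids constants altogether: it corrects $\varphi$ at finitely many points $k_{1},\dots,k_{N}$ covering its support, each correction localised by a bump function $\varepsilon_{l}$ as in \eqref{tilde_varphi}, so every subtracted term is automatically in $C_{c}^{\infty}(G)$ and the comparison with its majorant $\psi$ holds globally. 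Until you introduce such a localisation (either the cutoff $\chi$ or the paper's finite cover), the central step of your proof fails.
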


\begin{proof}
	In this proof we will use the fact that there is a canonical neighbourhood $V$ of $e$ such that $VV \subseteq U$ and $V^{-1} = V$, and so that $x_{i}(g^{-1}) = -x_{i}(g)$ for all $g \in V$ (see e.g. Proposition 2.1 (b) of \cite{Foll}, pp35--6). Let $\varphi\in C_c^\infty(G)$ with $H: = \text{supp} (\varphi)$. The set $H$ is covered by $\displaystyle \bigcup_{k\in H}l_kV$ and since it is compact, there is a finite subcover $\{ V_1, \dots V_N\}$, where $V_i:=l_{k_i}V$ for some $k_i\in H$, $i = 1,\dots , N$. Furthermore, since $G$ is a Hausdorff space, we can separate the points $k_1,k_2, \dots, k_N$ by a family of pairwise disjoint open sets $W_1, \dots, W_N \subset G$ such that $k_i\in W_i$ for all $i=1, \dots, N$. In the light of our construction of $V$, we may assume that all but possibly one of the $k_{1}, \ldots, k_{N} \notin V$ (indeed we may simply discard any that are). For the case where one of the $k_{i}$'s is in $V$, we will, without loss of generality, replace it with $e$.
	
Define $\widetilde{\varphi}\in C_c^\infty(G)$ by
	\begin{equation}\label{tilde_varphi}
	\widetilde{\varphi}(g) : = \varphi(g) - \sum_{l =1}^N \varphi(k_l)\varepsilon_l(g) - \sum_{l =1}^N \sum_{r =1}^d x_r(k_l^{-1}g)\; \varepsilon_l(g)\; X_r \varphi(k_l)
	\end{equation}
	for all $g \in G$, where for all $l=1,\dots, N$,$\varepsilon_l \in C_c^\infty(G)$ takes values in $[0,1]$ with support in $W_l$, and is equal to 1 in a neighbourhood of $k_l$.

It is straightforward to check that $\widetilde{\varphi}(k_l) = X_i\widetilde{\varphi}(k_m) = 0$, for all $l, m = 1, \ldots N, i=1,\dots d$. For any $g \in H$, there exists at least one $m = 1, \ldots, N$ such that $g \in V_{m}$ and for such a choice of $m$ we have

	$$\displaystyle \widetilde{\varphi}(g)= \widetilde{\varphi}(g) -\widetilde{\varphi}(k_m) -\sum_{i =1}^d x_i(k_m^{-1}g) \; X_i\widetilde{\varphi}(k_m).$$

Using this with a suitably translated form of Taylor's theorem on Lie groups (see e.g. \cite{Hel1}, p.105 or  \cite{App1}, pp.127--8) and the Cauchy--Schwarz inequality, we find that for all $g\in V_{m}$
	\bean
	|\widetilde{\varphi}(g)| & = &\left| \widetilde{\varphi}(g) -\widetilde{\varphi}(k_m) -\sum_{i =1}^d  x_i(k_m^{-1}g) \; X_i\widetilde{\varphi}(k_m)\right|\\
	& \leq & \frac{1}{2}\sum_{i,j =1}^{d} \left|x_i(k_m^{-1}g)x_j(k_m^{-1}g)\right| \; \sup_{v\in G}\left|X_iX_j\widetilde{\varphi}(v)\right|\\
	&\leq &  \frac{d}{2}D_{\varphi}\sum_{i=1}^{d} x_i(k_m^{-1}g)^2, \eean
where $D_{\varphi}:= \sup_{v\in G}\left|X_iX_j\widetilde{\varphi}(v)\right|$, and so for all $g \in V$,

$$ |\widetilde{\varphi}(k_{m}g)| \leq   \frac{d}{2} D_{\varphi}\sum_{i=1}^{d} x_i(g)^2.$$

We define a non--negative function $\psi \in C_{c}^{\infty}(G)$ such that for all $m = 1, \ldots, N,  V_{m} \subseteq$ supp($\psi$)  with $\psi(e) = 0$ and

$$ \psi(k_{m}g) \geq \sum_{i=1}^{d} x_i(g)^2$$

for all $g \in V$. Since $K \subseteq$ supp$(\psi)$ and $\widetilde{\varphi}(e)= \psi(e)=0$, by almost positivity of $T$ we deduce that
	\begin{equation*}
	|T(\widetilde{\varphi})| \leq \frac{d}{2} D_{\varphi}T(\psi).
	\end{equation*}
	The result then follows by the definition of $\widetilde{\varphi}$.
\end{proof}

A Borel measure $\mu$ on $G$ is a \textit{L\'evy measure} if $\mu(\{e\}) = 0$ and for every canonical co--ordinate neighbourhood $U$ of $e$ we have
	\begin{equation*}
	\int_{U} \left(\sum_{i =1}^d x_i^2(g)\right)  \mu(dg) < \infty, \; \text{and } \mu(U^c) < \infty,
	\end{equation*}
c.f. equation (1.8) in \cite{Liao} p.12.

We now establish a canonical form for linear functionals that satisfy the PMP. We closely follow the argument employed to prove Proposition 2.10 in \cite{Hoh} for the case $G = \R^{d}$ (see also Theorem 2.21 in \cite{BSW}, pp.47--50).

\begin{theorem} \label{PMPLF}
	Let $T:C_c^\infty(G)\to \R$ be a linear functional satisfying the positive maximum principle. Then there exists $c \geq 0, b = (b_{1}, \ldots, b_{d}) \in \Rd$, a non--negative definite symmetric $d \times d$ real--valued matrix $a = (a_{ij})$, and a L\'{e}vy measure $\mu$ on $G$ such that
	
	\begin{eqnarray} \label{distr_form}
	T\varphi & = & \sum_{i,j =1}^d a_{ij}X_iX_j \varphi(e) + \sum_{i=1}^d b_i X_i \varphi(e) - c\varphi(e)\nonumber \\ & + & \int_{G} \left(\varphi(g)-\varphi(e) - \sum_{i=1}^d x_i(g) X_i\varphi(e)\right) \mu(dg),
	\end{eqnarray}
	for all $\varphi\in C_c^\infty(G)$.
\end{theorem}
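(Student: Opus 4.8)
The plan is to follow the strategy of Hoh indicated in the text, splitting $T$ into a part supported at $e$ and a non-local part carried by a measure on $G \setminus \{e\}$. Since $T$ satisfies the PMP it is almost positive, so Theorem \ref{order2} already tells us that $T$ is a distribution of order $2$. I would first restrict $T$ to $C_{c}^{\infty}(G \setminus \{e\})$: any non-negative $\varphi$ supported there has $\varphi(e) = 0$, so almost positivity gives $T\varphi \geq 0$, i.e. the restriction is a positive distribution on the open submanifold $G \setminus \{e\}$. Proposition \ref{dist2} then furnishes a regular Borel measure $\mu$ on $G \setminus \{e\}$, which I extend by $\mu(\{e\}) = 0$, such that $T\varphi = \int_{G}\varphi \, d\mu$ for all $\varphi \in C_{c}^{\infty}(G \setminus \{e\})$.

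The crux is to show that $\mu$ is a L\'{e}vy measure. For the local bound $\int_{U}\sum_{i} x_{i}^{2}\, d\mu < \infty$ I would fix $\theta \in C_{c}^{\infty}(G)$ equal to $\sum_{i} x_{i}^{2}$ near $e$, together with an increasing sequence of cut-offs $\eta_{n} \in C_{c}^{\infty}(G \setminus \{e\})$, $0 \leq \eta_{n} \uparrow 1$, concentrated so that their first and second derivatives on the shell of radius $\varepsilon_{n}$ about $e$ are $O(\varepsilon_{n}^{-1})$ and $O(\varepsilon_{n}^{-2})$. Because $\theta$ and $X_{i}\theta$ vanish to orders $2$ and $1$ at $e$, the products $X^{\alpha}(\theta\eta_{n})$ with $|\alpha| \leq 2$ stay uniformly bounded, so the order-$2$ estimate of Theorem \ref{order2} on the fixed compact set $\supp(\theta)$ gives $\int \theta\eta_{n}\, d\mu = T(\theta\eta_{n}) \leq M$ uniformly in $n$, and monotone convergence yields finiteness. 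For $\mu(U^{c}) < \infty$ I would instead use the PMP directly: take $\varphi \in C_{c}^{\infty}(G)$ with $0 \leq \varphi \leq 1$, $\varphi \equiv 1$ near $e$ and $\supp(\varphi) \subseteq U$, and $\varphi_{n} \in C_{c}^{\infty}(G \setminus \{e\})$ increasing to $1 - \varphi$. Then $g_{n} := \varphi + \varphi_{n}$ obeys $0 \leq g_{n} \leq 1$ with $g_{n}(e) = 1 = \sup g_{n}$, so the PMP gives $T g_{n} \leq 0$, whence $\int \varphi_{n}\, d\mu = Tg_{n} - T\varphi \leq -T\varphi < \infty$; letting $n \to \infty$ gives $\mu(U^{c}) \leq \int(1 - \varphi)\, d\mu \leq -T\varphi < \infty$.

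Once $\mu$ is known to be a L\'{e}vy measure, the compensated integral $I\varphi := \int_{G}\bigl(\varphi(g) - \varphi(e) - \sum_{i} x_{i}(g) X_{i}\varphi(e)\bigr)\, d\mu$ converges for every $\varphi \in C_{c}^{\infty}(G)$, and by Taylor's theorem near $e$ combined with $\int_{U}\sum_{i} x_{i}^{2}\, d\mu < \infty$ and $\mu(U^{c}) < \infty$ it defines a distribution of order $2$. Hence $S := T - I$ is a distribution of order $2$, and for $\varphi \in C_{c}^{\infty}(G \setminus \{e\})$ one has $S\varphi = T\varphi - \int \varphi\, d\mu = 0$, so $\supp(S) \subseteq \{e\}$. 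Theorem \ref{dist3} then gives $S\varphi = \sum_{|\alpha| \leq 2} c_{\alpha} X^{\alpha}\varphi(e)$; writing the second-order part with a symmetric matrix $a$ (the commutators $[X_{i},X_{j}]$, being first order, are absorbed into the coefficients $b_{i}$) and setting $c$ equal to minus the zeroth-order coefficient produces exactly the claimed form.

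It remains to check $c \geq 0$ and that $a$ is non-negative definite, which I would do by testing the PMP against tailored functions. Taking $\varphi_{n} \uparrow 1$ that are flat to second order at $e$, the PMP gives $T\varphi_{n} \leq 0$ while dominated convergence gives $T\varphi_{n} \to -c$, so $c \geq 0$. For the definiteness of $a$, fix $\xi \in \Rd$ and choose, for small $s > 0$, a function $\varphi_{s} \in C_{c}^{\infty}(G)$ supported in $\{\sum_{i} x_{i}^{2} \leq 4 s^{2}\}$, equal to $-\tfrac{1}{2}\bigl(\sum_{i}\xi_{i} x_{i}\bigr)^{2}$ near $e$, everywhere $\leq 0$ and with $|\varphi_{s}(g)| \leq C|\xi|^{2}\sum_{i} x_{i}^{2}(g)$; then $\varphi_{s}(e) = \sup \varphi_{s} = 0$, its Hessian at $e$ is $-\xi_{i}\xi_{j}$, and the PMP yields $\sum_{i,j} a_{ij}\xi_{i}\xi_{j} \geq \int \varphi_{s}\, d\mu \geq -C|\xi|^{2}\int_{\{0 < \sum_{i} x_{i}^{2} \leq 4s^{2}\}} \sum_{i} x_{i}^{2}\, d\mu$, so letting $s \to 0$ and using the tail of the convergent integral forces $\sum_{i,j} a_{ij}\xi_{i}\xi_{j} \geq 0$. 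I expect the \emph{main obstacle} to be the pair of L\'{e}vy-measure estimates of the second paragraph: the local bound relies on the cut-off derivatives being damped by the quadratic vanishing of $\theta$ at $e$, while the finiteness of $\mu(U^{c})$ genuinely needs the full PMP (not merely almost positivity) together with the finiteness of $T$ on all of $C_{c}^{\infty}(G)$. The remaining steps are essentially bookkeeping and appeals to the results already established.
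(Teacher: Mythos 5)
Your proof is correct, and its skeleton is the same Hoh-style decomposition that the paper uses (order-$2$ via almost positivity, a measure carrying $T$ away from $e$, a compensated integral, a point-supported remainder handled by Theorem~\ref{dist3}, then sign checks for $c$ and $a$). Where you genuinely diverge is in the construction of $\mu$ and the proof of its local moment bound. The paper multiplies $T$ by the globally defined function $\xi := \sum_{i} x_i^2\,\phi_1 + \phi_2$, which vanishes only at $e$; then $\xi \cdot T$ is a positive distribution on all of $G$, so Proposition~\ref{dist2} applies literally, and the bound $\int_U \sum_i x_i^2\, d\mu \leq \nu(V) < \infty$ falls out of the local finiteness of the representing measure $\nu$ with no estimates at all. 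You instead restrict $T$ to $C_c^\infty(G\setminus\{e\})$ and invoke the positive-distribution-equals-measure result there; note that Proposition~\ref{dist2} as stated concerns distributions on $G$, so you need its analogue on the open submanifold $G^{*}$ (true by the same proof, but it should be said). Your local bound then does real work: the scaled cut-offs with $O(\varepsilon_n^{-1})$, $O(\varepsilon_n^{-2})$ derivative control succeed precisely because $T$ has order $2$ and $\theta$ vanishes quadratically at $e$, so the Leibniz terms stay bounded --- a correct, more hands-on alternative to the paper's one-line regularity argument. The remaining deviations are mirror images of the paper's steps: you verify that the compensated integral is a distribution of order $2$ directly from Taylor's theorem, where the paper shows it satisfies the PMP and cites Theorem~\ref{order2}; and you obtain non-negative definiteness of $a$ by testing the full PMP against $-\tfrac{1}{2}\bigl(\sum_i \xi_i x_i\bigr)^2$ (maximum $0$ at $e$), where the paper tests almost positivity against the non-negative function $+\tfrac{1}{2}\bigl(\sum_i \xi_i x_i\bigr)^2$ cut off near $e$. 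Your tail estimate $\mu(U^c) \leq -T\varphi$ is essentially identical to the paper's, and your diagnosis that this step needs the full PMP rather than almost positivity matches the paper. Finally, your remark that the commutators $[X_i, X_j]$, being first order, must be absorbed into the $b_i$ so that $a$ can be taken symmetric is a point the paper passes over silently (``it is clear that $(a_{ij})$ is symmetric''), and is worth retaining.
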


\begin{proof} We begin by remarking that $T$ is a distribution of order $2$ by Theorem \ref{order2}.
	Let $V$ be a compact neighbourhood of $e$ such that $\overline{U}\subseteq V$, and let $\phi_1, \phi_2\in C_c^\infty(G)$ be two functions taking values in $[0,1]$ with supports respectively in $V$ and in $U^C$, such that $\phi_1(g) = 1$ for all $g\in U$, and $\phi_2(g) = 1$ for all $g\in V^C$. We introduce the function $\displaystyle \xi := \sum_{i=1}^d x_i ^2 (\cdot) \cdot \phi_1 + \phi_2$ from $G$ to $\R$ and note that $\xi(e)=0$.
Then $\xi \cdot T$ is a positive distribution. Indeed, for all $\varphi \in C_c^\infty(G)$ such that $\varphi\geq 0$, we also have $\xi \cdot \varphi \geq 0$ with $\xi(e)\varphi(e) = 0$. So by almost positivity of $T$ we get
	\begin{equation*}
	\langle \xi \cdot T, \varphi \rangle = \langle  T, \xi \cdot \varphi \rangle \geq 0.
	\end{equation*}
	Since $\xi \cdot T$ is a positive distribution, by Proposition \ref{dist2} there exists a unique regular Borel measure $\nu$ on $G$ such that $\nu = \xi \cdot T $, i.e. $\langle \xi\cdot T , f \rangle =  \int_G f(g) \nu(dg)$ for all $f\in C^\infty_c(G)$.
	We define another Borel measure $\mu$ on $G$ by $\mu(\{e\}) = 0$ and $\mu: = \frac{1}{\xi} \cdot \left.\nu\right|_{G^*}$ on $G^*: = G \setminus \{e\}$. For all $f \in C^\infty_c(G)$ with $e \notin$ supp$(f)$ we have
	\begin{equation}\label{f_mu}
	\langle T, f\rangle = \int_{G}f(g)\mu(dg).
	\end{equation}
	We show that $\mu$ is a L\'{e}vy measure. By regularity of $\nu$ we have
	
	\begin{equation} \label{LevyCond1}
	\int_{U} \left(\sum_{i =1}^d x^2_i(g)\right)  \mu(dg) \leq \int_{V} \xi(g) \mu(dg) \leq \nu(V)< \infty
	\end{equation}
	
	Next let $\alpha, \beta \in C_c^\infty(G)$ be two functions taking values in $[0,1]$, such that $\text{supp} (\alpha)\subset U$ and $\text{supp} (\beta) \subset U^c$
	with $\alpha(e)=1$, so
	\begin{equation*}
	\alpha(e) + \beta(e) = 1 + 0 = \sup_{g\in G} (\alpha +\beta)(g).
	\end{equation*}
	By the positive maximum principle, $T(\alpha+\beta)\leq 0$, so by linearity of $T$, we get ${T\beta \leq - T \alpha}$, that is
	\begin{equation*}
	\la T, \beta \ra = \int_{G^*} \beta(g) \mu(dg) \leq - \la T, \alpha \ra
	\end{equation*}
	Taking the supremum over all possible $\beta$, it then follows that
	
	\begin{equation} \label{LevyCond2}
	\mu(U^{c}) \leq - \la T, \alpha \ra < \infty,
	\end{equation}
	so the measure $\mu$ is indeed a L\'evy measure.

Our next goal is to derive the form (\ref{distr_form}). To this end we introduce a linear functional $S: C_c^\infty(G) \to \R$, by
	\begin{equation*}
	S\varphi := \int_{G}\left[ \varphi(g) -\varphi(e)-\sum_{i=1}^d x_i(g)X_i \varphi(e)\right] \mu(dg), \qquad \text{for all } \varphi\in C_c^\infty(G).
	\end{equation*}
	The finiteness of the integral follows by Taylor's theorem, using (\ref{LevyCond1}) and (\ref{LevyCond2}). Using Lemma \ref{localmax}, for all $\varphi\in C^\infty_c(G)$ such that $\displaystyle \varphi(e) = \sup_{g\in G} \varphi(g) \geq 0$ we have $X_i \varphi(e) = 0$, therefore
	\begin{equation*}
	S\varphi = \int_{G}\left(\varphi(g)-\varphi(e) \right) \; \mu(dg)\leq 0.
	\end{equation*}
	That is, $S$ satisfies the positive maximum principle. By Theorem \ref{order2} both $T$ and $S$ are distributions of order 2, thus so is their difference $P := T-S$.
	If $\varphi\in C_c^\infty(G^{*})$, then
	\begin{equation*}
	S\varphi = \int_{G} \varphi(g) \mu(dg) = T\varphi.
	\end{equation*}
	That is $P\varphi = 0$ for all such $\varphi\in C^\infty_c(G^{*})$, therefore $\text{supp}(P)\subseteq \{e\}$. Thus, using Theorem \ref{dist3}, $P$ is of the form
	\begin{equation*}
	P\varphi = \sum_{i,j=1}^d a_{ij}X_iX_j \varphi(e) +\sum_{i=1}^d b_i X_i \varphi(e) - c\varphi(e).
	\end{equation*}
	We will now prove that the constant $c$ is positive. Let $(\varphi_k)_{k\in\N}$ be a sequence of non-negative, monotone increasing functions in $C_c^\infty(G)$ such that $\varphi_k =1$ in a neighbourhood of $e$, which is pointwise convergent to $\mathbbm{1}_G$, then by the monotone convergence theorem
	\begin{equation*}
	S \varphi_k = \int_{G} (\varphi_k(g) -1) \; \mu(dg) \rightarrow 0, \qquad \text{ as } k\to \infty.
	\end{equation*}
	So
	\begin{equation*}
	T\varphi_k = P\varphi_k + S\varphi_k = - c\varphi_k (e) + S\varphi_k \rightarrow -c, \qquad \text{ as } k\to \infty.
	\end{equation*}
	Note that for all $k\in \N$, $ \displaystyle \varphi_k(e) = \sup_{g\in G} \varphi_k(g)\geq 0$, and since $T$ satisfies the positive maximum principle we have $T\varphi_k \leq 0$. Thus, $c\geq 0$.
	
	It is clear that $(a_{ij})$ is symmetric. We will now prove that $(a_{ij})$ is also positive definite.
	Let $(\varepsilon_k)_{k\in \N}$ be a sequence of monotone decreasing functions in $C^\infty_c(G)$, taking values in $[0,1]$ such that $\varepsilon_k=1$ in a neighbourhood of $e$ with $V_k \subset V_{k'}$ when $k> k'$ and $\displaystyle \bigcap_{k\in \N} V_k = \{e\}$.
	Let us also denote by $f_\xi \in C^\infty_c(G)$ the function defined by $\displaystyle f_\xi(\cdot) := \frac{1}{2} \sum_{i,j=1}^d \xi_i \xi_j x_i(\cdot)x_j(\cdot)$, where $\xi = (\xi_1,\dots, \xi_d)\in \R^d$.
	Note that for all $1\leq k,l\leq d$, we have
	\begin{align*}
	X_lX_kf_\xi & = \frac{1}{2} \sum_{i,j=1}^d \xi_i \xi_j X_lX_k(x_ix_j)\\
	& = \frac{1}{2} \sum_{i,j=1}^d \xi_i \xi_j (\delta_{ki}\delta_{lj}+ \delta_{kj}\delta_{li})\\
	& = \xi_l \xi_k.
	\end{align*}
	Furthermore
	\begin{align*}
	T(\varepsilon_k \cdot f_\xi) & = P(\varepsilon_k \cdot f_\xi) + S(\varepsilon_k \cdot f_\xi) \\
	& = \sum_{i,j=1}^d a_{ij} \xi_i \xi_j +\int_{G} \varepsilon_k(g) f_\xi(g) \; \mu(dg).
	\end{align*}
	Thus, by dominated convergence $\displaystyle T(\varepsilon_k \cdot f ) \to \sum_{i,j=1}^d a_{ij} \xi_i \xi_j$ as $k$ goes to infinity.
	Also, since $T$ is almost positive, $T(\varepsilon_k \cdot f ) \geq 0$, so $ \displaystyle \sum_{i,j=1}^d a_{ij} \xi_i \xi_j \geq 0$ and $(a_{ij})$ is positive definite.
	
	It is clear that $(a_{ij})$ and $c$ are uniquely defined for all $T$. Moreover, $\nu$ was uniquely defined and therefore $\mu$ is also uniquely defined. This allows as to calculate $\displaystyle \sum_{i=1}^d b_i X_i\varphi(e)$ for any $\varphi\in C^\infty_c(G)$, so the vector $b = (b_1, \dots, b_d)\in \R^d$ is also uniquely defined.
\end{proof}

Next we establish the converse to Theorem \ref{PMPLF}.

\begin{theorem}
	Every linear functional on $C^\infty_c(G)$ of the form (\ref{distr_form}) satisfies the positive maximum principle.
\end{theorem}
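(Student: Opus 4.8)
The plan is to fix $\varphi \in C_c^\infty(G)$ with $\varphi(e) = \sup_{\tau\in G}\varphi(\tau) \ge 0$ and show that each of the four contributions to $T\varphi$ in (\ref{distr_form}) is non-positive. The crucial preliminary observation is that $e$ is in particular a \emph{local} maximum of $\varphi$, so Lemma \ref{localmax} gives $X_i\varphi(e)=0$ for all $i=1,\dots,d$. This immediately annihilates the drift term $\sum_i b_i X_i\varphi(e)$ together with the compensating linear term inside the integral, so that only the second-order term, the term $-c\varphi(e)$, and a simplified integral remain to be controlled.

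The two easy terms go as follows. Since $c\ge 0$ and $\varphi(e)\ge 0$ by hypothesis, we have $-c\varphi(e)\le 0$. For the integral, the vanishing of the $X_i\varphi(e)$ reduces the integrand to $\varphi(g)-\varphi(e)$; as $\varphi(e)=\sup_{\tau}\varphi(\tau)$, this is non-positive for every $g$, and $\mu$ is a non-negative measure, so $\int_G(\varphi(g)-\varphi(e))\,\mu(dg)\le 0$. Its finiteness is guaranteed because $\mu$ is a L\'evy measure and, the first derivatives at $e$ now vanishing, Taylor's theorem shows $\varphi(g)-\varphi(e)=O\big(\sum_i x_i(g)^2\big)$ near $e$, while $\mu(U^c)<\infty$ handles the region away from $e$.

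The main obstacle is the second-order term $\sum_{i,j}a_{ij}X_iX_j\varphi(e)$, and here I would pass to the canonical coordinate chart of Section 2. Writing $\widetilde{\varphi}=J_\phi\varphi$ and $\widetilde{X_i}=\sum_l c_{il}(\cdot)\partial_l$ with $c_{il}(0)=\delta_{il}$, the relation $\widetilde{X_i}=J_\phi X_i J_\phi^{-1}$ gives $X_iX_j\varphi(e)=\widetilde{X_i}\widetilde{X_j}\widetilde{\varphi}(0)$, and a direct expansion yields
\begin{equation*}
\widetilde{X_i}\widetilde{X_j}\widetilde{\varphi}(0)=\partial_i\partial_j\widetilde{\varphi}(0)+\sum_l(\partial_i c_{jl})(0)\,\partial_l\widetilde{\varphi}(0).
\end{equation*}
Because $0$ is a local maximum of $\widetilde{\varphi}$, every first partial $\partial_l\widetilde{\varphi}(0)$ vanishes, the second sum drops out, and we obtain $X_iX_j\varphi(e)=\partial_i\partial_j\widetilde{\varphi}(0)=:H_{ij}$, the ordinary Hessian of $\widetilde{\varphi}$ at the origin. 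In particular this quantity is automatically symmetric in $(i,j)$, matching the symmetry of $a$. This identification of the non-commutative second-order Lie derivative with the coordinate Hessian is the only genuinely delicate point, and it rests entirely on the vanishing of the first-order derivatives at the maximum.

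To conclude, note that since $0$ is a local maximum of the smooth function $\widetilde{\varphi}$, its Hessian $H=(H_{ij})$ is negative semi-definite, while $a=(a_{ij})$ is symmetric and non-negative definite by hypothesis. The remaining contribution is therefore $\sum_{i,j}a_{ij}H_{ij}=\tr(aH)$, and the elementary fact that $\tr(aH)\le 0$ whenever $a\succeq 0$ and $H\preceq 0$ closes the argument: diagonalising $a=\sum_k\lambda_k u_ku_k^{\top}$ with $\lambda_k\ge 0$ gives $\tr(aH)=\sum_k\lambda_k\,u_k^{\top}Hu_k\le 0$, each summand being non-positive. Adding the four non-positive contributions yields $T\varphi\le 0$, which is precisely the positive maximum principle.
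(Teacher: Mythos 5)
Your proof is correct, and its skeleton (use Lemma \ref{localmax} to kill the drift term and the compensator inside the integral, note $-c\varphi(e)\le 0$, and bound the integral by non-positivity of $\varphi(\cdot)-\varphi(e)$ against the non-negative measure $\mu$) is the same as the paper's; the genuine divergence is in how the second-order term is handled. The paper argues intrinsically: for $h\in U$ it expands $t\mapsto\varphi\bigl(\exp\bigl(t\sum_{i}x_i(h)X_i\bigr)\bigr)$ to second order in $t$, uses the maximum at $e$ to conclude $\sum_{i,j}x_i(h)x_j(h)X_iX_j\varphi(e)\le 0$, and then rescales to obtain this for every direction $\lambda\in\R^d$; the final pairing of this negative semi-definite quadratic form with the non-negative definite matrix $a$ is left implicit. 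You instead work in the canonical chart of Section 2: using $\widetilde{X_i}=J_\phi X_iJ_\phi^{-1}$ and the vanishing of first derivatives at the maximum, you identify $X_iX_j\varphi(e)$ with the Euclidean Hessian $\partial_i\partial_j\widetilde{\varphi}(0)$ (the correction terms $\sum_l(\partial_i c_{jl})(0)\,\partial_l\widetilde{\varphi}(0)$ dropping out), then invoke the classical second-derivative test and the explicit trace inequality $\tr(aH)\le 0$ for $a$ non-negative definite and $H$ negative semi-definite. Your route makes two points explicit that the paper glosses over: the symmetry of the second-order data at a critical point (equivalently, that $[X_i,X_j]\varphi(e)=0$ there), and the actual deduction $\sum_{i,j}a_{ij}X_iX_j\varphi(e)\le 0$ from negative semi-definiteness, which in the paper is asserted rather than proved. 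The paper's route, in exchange, avoids the coordinate computation entirely and stays with the group exponential; both arguments ultimately rest on the same calculus fact, that second derivatives at a maximum are non-positive.
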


\begin{proof}
	By Lemma \ref{localmax}, for any function $\varphi\in C_c^\infty(G)$ such that $\displaystyle \varphi(e) =\sup_{g\in G}  \varphi(g)\geq 0$, we have $X\varphi(e) =0$ for all $X\in \g$.
	Thus, in (\ref{distr_form}) both the first order differential part and the integral part satisfy the positive maximum principle. So now we only have to deal with the second order differential part.\\
	Fix $h\in U$ and let us consider the function $\displaystyle H : t \mapsto \varphi\left(\exp\left(t\sum_{i =1}^d x_i(h)X_i\right)\right)$ from $\R$ to $G$. Let $P_2$ be the second order Taylor polynomial of $H$ around $0$,
	$$
	P_2(t) = \varphi(e) + t \sum_{i =1}^d x_i(h) X_i\varphi(e) + \frac{t^2}{2} \sum_{i, j=1}^d x_i(h) x_j(h) X_iX_j \varphi(e)
	$$
	Then $H(t) -P_2(t) = o(t^2)$ as $t\to 0$. That is
	\begin{equation*}
	\varphi\left(\exp\left(t\sum_{i =1}^d x_i(h)X_i\right)\right) - \varphi(e) - t \sum_{i =1}^d x_i(h) X_i\varphi(e) - \frac{t^2}{2} \sum_{i, j=1}^d x_i(h) x_j(h) X_iX_j \varphi(e) =o(t^2)
	\end{equation*}
	Thus, by Lemma \ref{localmax} and the fact that $\displaystyle \varphi(e) = \sup_{g\in G} \varphi(g)$ we have
	\begin{equation*}
	o(t^2) + \frac{t^2}{2} \sum_{i, j=1}^d x_i(h) x_j(h) X_iX_j \varphi(e) = \varphi\left(\exp\left(t\sum_{i =1}^d x_i(h)X_i\right)\right) - \varphi(e) \leq 0
	\end{equation*}
	Hence, dividing by $t^2$ and taking the limit $t\to 0$ we get
	\begin{equation*}
	\sum_{i, j=1}^d x_i(h) x_j(h) X_iX_j \varphi(e) \leq 0
	\end{equation*}
	
	The map $h\mapsto (x_1(h), \dots, x_d(h))$ is a diffeomorphism from $U$ to an open neighbourhood $\widetilde{U}$ of $0\in \R^d$.
	Let us fix an open ball $B_r(0)\subset \widetilde{U}$ of radius $r>0$. Then given any $\lambda\in \R^d$, there exists $c(\lambda)>0$ such that $c(\lambda) \lambda\in B_r(0)$, i.e. for all $i=1,\dots, d$, $c(\lambda) \lambda_i= x_i(h)$ for some $h\in U$. Then we have
	\begin{equation*}
	\sum_{i, j=1}^d \lambda_i \lambda_j X_iX_j f(e) = \frac{1}{c(\lambda)^2}\sum_{i, j=1}^d x_i(h)x_j(h)X_iX_jf(e) \leq 0
	\end{equation*}

We conclude that any linear functional on $C^\infty_c(G)$ of the form (\ref{distr_form}) satisfies the positive maximum principle.

\end{proof}

We may now proceed to the main result of this section, namely the generalisation of Courr\`ege's theorem from Euclidean space to Lie groups. We will need the concept of a {\it L\'{e}vy kernel} on $G$ which is a mapping $\mu:G \times {\mathcal B}(G) \rightarrow [0, \infty]$ such that $\mu(g, \cdot)$ is a L\'{e}vy measure on $G$ for all $g \in G$.

\begin{theorem} \label{PMP1} The mapping $A: C_{c}^{\infty}(G) \rightarrow {\mathcal F}(G)$ satisfies the PMP if and only if there exist functions $c, b_{i}, a_{jk}~(1 \leq i,j,k \leq d)$ from $G$ to $\R$, wherein $c$ is non--negative, and the matrix $a(\sigma): = (a_{jk}(\sigma))$ is non--negative definite and symmetric for all $\sigma \in G$, and a L\'{e}vy kernel $\mu$, such that for all $f \in C_{c}^{\infty}(G), \sigma \in G$,

\begin{eqnarray} \label{PMP2}
Af(\sigma) & = & -c(\sigma)f(\sigma) + \sum_{i=1}^{d}b_{i}(\sigma)X_{i}f(\sigma) + \sum_{j,k = 1}^{d}a_{jk}(\sigma)X_{j}X_{k}f(\sigma) \nonumber \\
& + & \int_{G}\left(f(\sigma \tau) - f(\sigma) - \sum_{i=1}^{d}x_{i}(\tau)X_{i}f(\sigma)\right)\mu(\sigma, d\tau).
\end{eqnarray}

\end{theorem}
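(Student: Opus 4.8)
The plan is to transfer the functional-level result of Theorem \ref{PMPLF} to the operator level by means of the family $\{A_g\}_{g\in G}$ introduced in (\ref{Ag}), exploiting the left-invariance of the basis vector fields $X_1,\ldots,X_d$ of $\g$. I would treat necessity and sufficiency separately, with almost all of the work already discharged by the preceding theorems.

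For necessity, suppose $A$ satisfies the PMP. By Lemma \ref{AgPMP}, every functional $A_\sigma:C_c^\infty(G)\to\R$ satisfies the PMP, so Theorem \ref{PMPLF} applies to each $A_\sigma$ individually. For each fixed $\sigma\in G$ this yields uniquely determined data $c(\sigma)\geq 0$, $b(\sigma)=(b_1(\sigma),\ldots,b_d(\sigma))\in\Rd$, a non-negative-definite symmetric matrix $a(\sigma)=(a_{ij}(\sigma))$, and a L\'evy measure $\mu(\sigma,\cdot)$, such that $A_\sigma$ has the form (\ref{distr_form}). Since these are defined for every $\sigma$, the functions $c,b_i,a_{ij}$ are well-defined real-valued functions on $G$, and $\sigma\mapsto\mu(\sigma,\cdot)$ is by construction a L\'evy kernel (its definition requires only that $\mu(\sigma,\cdot)$ be a L\'evy measure for each $\sigma$, which is exactly what Theorem \ref{PMPLF} provides).

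The key step is to recover $A$ from $\{A_\sigma\}$. From (\ref{Ag}) one has the identity $Af(\sigma)=(L_\sigma A L_{\sigma^{-1}}L_\sigma f)(e)=A_\sigma(L_\sigma f)$ for all $f\in C_c^\infty(G)$ and $\sigma\in G$. I would substitute $\psi=L_\sigma f$ into the representation (\ref{distr_form}) for $A_\sigma$ and simplify using left-invariance: since $X_i(L_\sigma f)=L_\sigma(X_i f)$ we get $X_i(L_\sigma f)(e)=X_i f(\sigma)$, and likewise $X_iX_j(L_\sigma f)(e)=X_iX_j f(\sigma)$, while $(L_\sigma f)(e)=f(\sigma)$ and $(L_\sigma f)(g)=f(\sigma g)$. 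Term by term, (\ref{distr_form}) then converts into (\ref{PMP2}); in particular the non-local integrand becomes $f(\sigma g)-f(\sigma)-\sum_i x_i(g)X_i f(\sigma)$ integrated against $\mu(\sigma,dg)$, which is precisely the non-local term of (\ref{PMP2}) after renaming the integration variable $g=\tau$. This establishes (\ref{PMP2}).

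For sufficiency I would run the computation in reverse: given $A$ of the form (\ref{PMP2}), the same left-invariance identities show that $A_\sigma\psi=A(L_{\sigma^{-1}}\psi)(\sigma)$ is exactly a functional of the form (\ref{distr_form}) with the prescribed characteristics evaluated at $\sigma$. By the converse to Theorem \ref{PMPLF} (the theorem immediately following it) each $A_\sigma$ satisfies the PMP, and Lemma \ref{AgPMP} then gives that $A$ satisfies the PMP. I do not expect a serious obstacle, since the heavy lifting is already in the functional-level results; the argument is a clean transfer through $\{A_\sigma\}$. The only points demanding care are the correct use of left-invariance when passing the derivatives through $L_\sigma$ (so that derivatives of $L_\sigma f$ at $e$ become derivatives of $f$ at $\sigma$) and the remark that the pointwise uniqueness in Theorem \ref{PMPLF} is precisely what guarantees the characteristics are genuine functions of $\sigma$ and that $\mu$ is a bona fide L\'evy kernel.
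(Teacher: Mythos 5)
Your proposal is correct and takes essentially the same route as the paper's own proof: both reduce the theorem to Lemma \ref{AgPMP} and Theorem \ref{PMPLF} applied to each functional $A_\sigma$, and then recover (\ref{PMP2}) from the identity $Af(\sigma)=A_\sigma(L_\sigma f)$ using left-invariance of the $X_i$. You merely spell out more explicitly than the paper the sufficiency direction (via the converse theorem to Theorem \ref{PMPLF}) and the pointwise uniqueness that makes the characteristics genuine functions of $\sigma$ and $\mu$ a L\'evy kernel.
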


\begin{proof}
	From Lemma \ref{AgPMP}, we know that $A$ satisfies the PMP if and only if for all $g\in G$ the linear functional $A_g$ as defined in (\ref{Ag}) satisfies the PMP.
	Thus, for each $g\in G $, the functional $A_g$ has the form (\ref{distr_form}),
	\bean
	A_g \varphi & = & \sum_{i,j =1}^d a_{ij}(g) X_i X_j \varphi(e) - \sum_{i=1}^d b_i(g)X_i \varphi(e) - c(g) \varphi(e)  \\
	& + & \int_{G} \left( \varphi(\tau) - \varphi(e) \sum_{i,j =1}^d x_i(\tau) X_i \varphi(e) \right) \mu(g, d\tau).
	\eean
	
	Then in particular for the function $L_g\varphi \in C^\infty_c(G)$, we get
	\bean
	A\varphi(g) = A_g(L_g\varphi) = & \sum_{i,j =1}^d a_{ij}(g) X_i X_j \varphi(g) - \sum_{i=1}^d b_i(g)X_i \varphi(g) - c(g) \varphi(g)  \\
	& + \int_{G} \left( \varphi(g\tau) - \varphi(g) - \sum_{i =1}^d x_i(\tau) X_i \varphi(g) \right) \mu(g, d\tau),
	\eean
and the result is established.	
\end{proof}

In the last part of this section, we will establish sufficient conditions for $A: C_{c}^{\infty}(G) \rightarrow C_{0}(G)$. We introduce some simplifying notation.  Define $H: C_{c}^{\infty}(G) \rightarrow C(G \times G)$ by
	\begin{equation}\label{H}
	Hf(g,\tau):= f(g\tau) - f(g) - \sum_{i=1}^d x_i(\tau) X_if(g),
	\end{equation}
for all $ f, \in C_{c}^{\infty}(G), g, \tau \in G$. We then write $A = A_{1} + A_{2} + A_{3}$ in (\ref{PMP2}), where $A_{2}f(g) = \int_{U}Hf(g,\tau)\mu(g, d \tau)$, and $A_{3}f(g) = \int_{U^{c}}Hf(g,\tau)\mu(g, d \tau)$.

If $X \in {\mathcal B}(G)$, then $C_{b}(X)$ denote the space of all real--valued bounded continuous functions defined on $X$.

\begin{theorem} \label{contA}
	If $A: C_{c}^{\infty}(G) \rightarrow {\mathcal F}(G)$ satisfies the positive maximum principle such that in (\ref{PMP2})
	\begin{itemize}
		\item[i)] The functions $a_{ij}, b_k, c$  are continuous for all $i,j,k=1,\dots, d$
		\item[ii)] The mappings $g \rightarrow \int_{U}h(\tau)\left(\sum_{i=1}^{d}x_{i}(\tau)^{2}\right)\nu(g, d\tau)$ are continuous from $G$ to $[0, \infty)$, for all $h \in C_{b}(U)$, and the mappings $g \rightarrow \int_{U^{c}}k(\tau)\mu(g, d\tau)$ are continuous from $G$ to $[0, \infty)$, for all $k \in C_{b}(U^{c})$,
		\end{itemize}
	then Ran$(A) \subseteq C(G)$.
\end{theorem}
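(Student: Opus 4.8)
The plan is to exploit the decomposition $A=A_1+A_2+A_3$ recorded above, where $A_1$ is the local (differential) part, and to show separately that each summand sends $C_c^\infty(G)$ into $C(G)$; since $C(G)$ is a vector space, $\Ran(A)\subseteq C(G)$ then follows. Throughout fix $f\in C_c^\infty(G)$ and $g_0\in G$, and let $g_n\to g_0$; it suffices to prove $A_jf(g_n)\to A_jf(g_0)$ for $j=1,2,3$. The part $A_1$ is immediate: $f,X_if,X_jX_kf\in C_c^\infty(G)\subseteq C(G)$, so by hypothesis (i) the products $c\cdot f$, $b_i\cdot X_if$, $a_{jk}\cdot X_jX_kf$ are continuous, and hence so is the finite sum $A_1f$.

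For $A_3$ I would use that the coordinate functions $x_i$ lie in $C_c^\infty(G)$, hence are bounded, so that $Hf(g,\cdot)$ is bounded and continuous on $U^c$ and
\begin{equation*}
A_3f(g)=\int_{U^c}f(g\tau)\,\mu(g,d\tau)-f(g)\,\mu(g,U^c)-\sum_{i=1}^d X_if(g)\int_{U^c}x_i(\tau)\,\mu(g,d\tau).
\end{equation*}
Taking $k\equiv1$ and $k=x_i$ in hypothesis (ii) shows $g\mapsto\mu(g,U^c)$ and $g\mapsto\int_{U^c}x_i(\tau)\,\mu(g,d\tau)$ are continuous, so the last two terms are continuous. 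For the first term I would split
\begin{equation*}
\begin{split}
&\int_{U^c}f(g_n\tau)\,\mu(g_n,d\tau)-\int_{U^c}f(g_0\tau)\,\mu(g_0,d\tau)\\
&\quad=\int_{U^c}\big(f(g_n\tau)-f(g_0\tau)\big)\,\mu(g_n,d\tau)+\int_{U^c}f(g_0\tau)\,\big(\mu(g_n,d\tau)-\mu(g_0,d\tau)\big).
\end{split}
\end{equation*}
The first integral is at most $\|f\circ l_{g_ng_0^{-1}}-f\|_\infty\,\mu(g_n,U^c)$, which tends to $0$ because $f$ is uniformly continuous of compact support (so left translations converge in $\|\cdot\|_\infty$ as $g_ng_0^{-1}\to e$) and $\mu(g_n,U^c)$ is bounded; the second tends to $0$ by hypothesis (ii), since $\tau\mapsto f(g_0\tau)$ belongs to $C_b(U^c)$.

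The term $A_2$ is the crux. Its treatment rests on the fact that $Hf(g,\cdot)$ vanishes to second order at $e$: the zeroth and first order contributions cancel because $x_i(e)=0$ and $X_jx_i(e)=\delta_{ij}$ in canonical coordinates. Consequently $Hf(g,\cdot)$ is, for each $g$, exactly the second order Taylor remainder at $e$ of $\tau\mapsto f(g\tau)$, and the uniform Taylor estimate already proved in Theorem \ref{order2} gives $|Hf(g,\tau)|\le\frac d2\big(\sup_{i,j}\|X_iX_jf\|_\infty\big)\sum_i x_i^2(\tau)$ for $\tau\in U$, uniformly in $g$. Writing $\rho_g(d\tau):=\big(\sum_i x_i^2(\tau)\big)\mu(g,d\tau)$ for the associated finite measure on $U$ — whose weak-$\ast$ continuity in $g$ against $C_b(U)$ is precisely the content of hypothesis (ii) — and $\psi_f(g,\tau):=Hf(g,\tau)/\sum_i x_i^2(\tau)$ (a bounded function), I would split
\begin{equation*}
\begin{split}
A_2f(g_n)-A_2f(g_0)&=\int_U\big(\psi_f(g_n,\tau)-\psi_f(g_0,\tau)\big)\,\rho_{g_n}(d\tau)\\
&\quad+\int_U\psi_f(g_0,\tau)\,\big(\rho_{g_n}(d\tau)-\rho_{g_0}(d\tau)\big).
\end{split}
\end{equation*}
For the first integral the key point is that $\big(\sum_i x_i^2\big)\big(\psi_f(g_n,\cdot)-\psi_f(g_0,\cdot)\big)=Hf(g_n,\cdot)-Hf(g_0,\cdot)$ is the second order remainder of $\tau\mapsto f(g_n\tau)-f(g_0\tau)$, so the same uniform estimate yields $\|\psi_f(g_n,\cdot)-\psi_f(g_0,\cdot)\|_\infty\le\frac d2\sup_{i,j}\|(X_iX_jf)\circ l_{g_ng_0^{-1}}-X_iX_jf\|_\infty\to0$; as $\rho_{g_n}(U)$ is bounded, this integral vanishes in the limit.

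The real obstacle is the second integral. The fixed integrand $\psi_f(g_0,\cdot)$ is bounded but \emph{generically discontinuous} at $e$ (for instance $x_ix_j/\sum_l x_l^2$ has no limit there), so hypothesis (ii), which tests $\rho_g$ only against genuinely continuous functions, cannot be applied directly. I expect to resolve this by a Portmanteau argument: hypothesis (ii) says exactly that $\rho_{g_n}\to\rho_{g_0}$ weakly on the (relatively compact) set $U$, the discontinuity set of $\psi_f(g_0,\cdot)$ is contained in $\{e\}$, and $\rho_{g_0}(\{e\})=0$ since $\mu(g_0,\{e\})=0$; hence $\int_U\psi_f(g_0,\cdot)\,\rho_{g_n}(d\tau)\to\int_U\psi_f(g_0,\cdot)\,\rho_{g_0}(d\tau)$. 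Combining the three parts gives $Af(g_n)\to Af(g_0)$, so $Af\in C(G)$ as required.
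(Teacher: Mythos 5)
Your proposal is correct, and at the structural level it is the same as the paper's proof: the decomposition $A=A_1+A_2+A_3$, hypothesis (i) disposing of $A_1$, and for each of $A_2,A_3$ the splitting into an ``integrand difference'' term integrated against $\mu(g_n,\cdot)$ plus a ``measure difference'' term with fixed integrand, the latter handled by hypothesis (ii). The genuine divergence is at the step you rightly call the crux. The paper disposes of the second $A_2$ term by asserting that $Hf(g,\tau)\big/\sum_{i=1}^d x_i(\tau)^2$ is ``bounded (and continuous) in $\tau\in U$'' and feeding it directly into (ii); as you observe, continuity fails at $\tau=e$ in general (the Hessian contribution behaves like $x_ix_j/\sum_l x_l^2$, which has no limit at $e$), so the function is not in $C_b(U)$ and (ii) cannot be invoked verbatim. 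Your portmanteau/mapping-theorem argument --- weak convergence of the finite measures $\rho_{g_n}=\bigl(\sum_i x_i^2\bigr)\mu(g_n,\cdot)$ on the metrizable set $U$, a bounded integrand whose discontinuity set $\{e\}$ is $\rho_{g_0}$-null --- is exactly the bridge that is missing, so your write-up repairs an imprecision in the published proof rather than merely reproducing it (one should only note that the mapping theorem is usually stated for probability measures, so you need the routine normalization using that total masses converge by (ii) with $h\equiv 1$). Similarly, for the first $A_2$ term the paper applies Taylor's theorem separately to $Hf(g,\cdot)$ and $Hf(g_n,\cdot)$, producing two unrelated intermediate points $\tau',\tau''$, and then asserts convergence ``by continuity''; your version, which applies the Taylor estimate once to $F_n=L_{g_ng_0^{-1}}f-f$ and yields the uniform bound $\frac d2\sup_{i,j}\|(X_iX_jf)\circ l_{g_ng_0^{-1}}-X_iX_jf\|_\infty\to 0$, is the tighter and fully rigorous form of the same idea. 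In short: same architecture, but your handling of the two delicate limits is a genuine improvement in rigour over the paper's.
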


\begin{proof} It is clear that if (i) holds,  the differential operator term $A_{1}$ in (\ref{PMP2}) has continuous range, and so we concentrate on $A_{3}$ and $A_{2}$ (in that order). Let $(g_{n})$ be a sequence in $G$ converging to $g \in G$ as $n \rightarrow \infty$. If $k: = {\bf 1}_{U^{c}}$ then $k \in C_{b}(U^{c})$, and by assumption (ii) we have that
$$\sup_{n \in N}\mu(g_{n}, U^{c}) = \sup_{n \in N}\int_{U^{c}}k(\tau)\mu(g_{n}, d\tau) < \infty.$$ A similar argument shows that $\sup_{\nN}\int_{U}\sum_{i=1}^{d}x_{i}(\tau)^{2}\mu(g_{n}, d\tau) < \infty$.

 For each $\nN$, we have
	\bean & & |A_{3}f(g) - A_{3}f(g_{n})|\\
	&  \leq & \left|\int_{U^{c}}(Hf(g, \tau) - Hf(g_{n}, \tau))\mu(g_{n}, d\tau)\right| \\
	& + & \left|\int_{U^c} Hf(g,\tau)\mu(g, d\tau) - \int_{U^c} Hf(g,\tau)\mu(g_{n}, d\tau)\right|
	\eean

The first term is majorized by $\sup_{\tau \in U^{c}}|H(g, \tau) - H(g_{n}, \tau)|\sup_{n \in N}\mu(g_{n}, U^{c}) \rightarrow 0$ as $n \rightarrow \infty$. The second term converges to zero as $n \rightarrow \infty$ by assumption (ii).

	We will now deal with the operator $A_{2}$. For all $g\in G$, we have
	\bean
	& & |A_{2}f(g) - A_{2}f(g_{n})|\\
	& \leq & \left|\int_{U} \left[ Hf(g,\tau)-Hf(g_n,\tau)\right] \;\mu(g_{n}, d\tau)\right|\\ & + & \left|\int_{U} Hf(g,\tau)\mu(g, d\tau) - \int_{U} Hf(g,\tau)\mu(g_{n}, d\tau)\right|.
	\eean
	By Taylor's formula on Lie groups, for all $\tau \in U$ there exist $\tau', \tau''\in U$ such that
	\begin{equation*}
	Hf(g,\tau) -  Hf(g_n,\tau) = \frac{1}{2} \sum_{i, j=1}^d x_i(\tau)x_j(\tau)(X_iX_jf(g\tau') -X_iX_jf(g_n\tau''))
	\end{equation*}
Using the Cauchy--Schwarz inequality, we see that $\frac{Hf(g,\tau) -  Hf(g_n,\tau)}{\sum_{i=1}^{d} x_{i}(\tau)^{2}}$ is bounded (and continuous) in $\tau \in U$, hence the first term goes to zero as $n \rightarrow \infty$, by continuity and assumption (ii). To verify that the second term also goes to zero as $n \rightarrow \infty$, we again use assumption (ii), together with the fact that (again by Taylor's theorem) $\frac{H(g, \tau)}{\sum_{i=1}^{d} x_{i}(\tau)^{2}}$ is bounded (and continuous) in $\tau \in U$.

\end{proof}

\begin{theorem} \label{vanishinf}
If $A: C_{c}^{\infty}(G) \rightarrow {\mathcal F}(G)$ satisfies the positive maximum principle such that for the L\'{e}vy kernel $\mu$ in (\ref{PMP2}) we have
\begin{itemize}
		\item[iii)]
		$$
		\lim_{\sigma\to \infty} \mu(\sigma, U^{c}) = 0,
		$$
	\end{itemize}
	 then $Af$ vanishes at infinity for all $f\in C^\infty_c(G)$.
\end{theorem}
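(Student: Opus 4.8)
The plan is to exploit the decomposition $A = A_{1} + A_{2} + A_{3}$ introduced just before Theorem~\ref{contA}, and to show that each of the three pieces tends to $0$ at infinity; I expect the first two to have compact support, so that only the large--jump part $A_{3}$ will genuinely require hypothesis (iii). Throughout I would fix $f \in C_{c}^{\infty}(G)$ and write $K := \supp(f)$, which is compact. The elementary observation used repeatedly is that a left--invariant differential operator does not enlarge supports, so $X_{i}f$ and $X_{i}X_{j}f$ are supported in $K$ as well; consequently $f(\sigma)$, $X_{i}f(\sigma)$ and $X_{i}X_{j}f(\sigma)$ all vanish once $\sigma \notin K$. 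I would also take the canonical co--ordinate neighbourhood $U$ to be relatively compact (consistent with the standing assumption $x_{i} \in C_{c}^{\infty}(G)$), so that $\overline{U}$ is compact.

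First I would dispose of the local part $A_{1}f(\sigma) = -c(\sigma)f(\sigma) + \sum_{i} b_{i}(\sigma)X_{i}f(\sigma) + \sum_{j,k}a_{jk}(\sigma)X_{j}X_{k}f(\sigma)$. Since every term carries a factor $f(\sigma)$, $X_{i}f(\sigma)$ or $X_{j}X_{k}f(\sigma)$, the function $A_{1}f$ vanishes identically off $K$, irrespective of the (possibly unbounded) behaviour of the coefficients $c, b_{i}, a_{jk}$; hence $A_{1}f$ has compact support and so vanishes at infinity. For the small--jump part $A_{2}f(\sigma) = \int_{U} Hf(\sigma,\tau)\,\mu(\sigma,d\tau)$ I would argue directly that the integrand vanishes for $\sigma$ outside a fixed compact set: if $\sigma \notin K\overline{U}^{-1}$ then for every $\tau \in U$ we have $\sigma\tau \notin K$ (otherwise $\sigma \in K\tau^{-1} \subseteq K\overline{U}^{-1}$) and also $\sigma \notin K$, so that $f(\sigma\tau) = f(\sigma) = X_{i}f(\sigma) = 0$ and therefore $Hf(\sigma,\tau) = 0$. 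Thus $A_{2}f$ is supported in the compact set $K\overline{U}^{-1}$ and again vanishes at infinity. This argument needs neither the Taylor estimate nor any integrability bound on $\int_{U} \sum_{i} x_{i}^{2}\,\mu(\sigma,d\tau)$, which is fortunate since those are not available under the hypotheses of this theorem.

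It remains to treat the large--jump part $A_{3}f(\sigma) = \int_{U^{c}} Hf(\sigma,\tau)\,\mu(\sigma,d\tau)$. Because $\mu(\sigma,\cdot)$ is a L\'evy measure we have $\mu(\sigma,U^{c}) < \infty$, and since $f$ and the co--ordinate functions $x_{i}$ are bounded the integrand is absolutely integrable; I would therefore split
\[
A_{3}f(\sigma) = \int_{U^{c}} f(\sigma\tau)\,\mu(\sigma,d\tau) - f(\sigma)\,\mu(\sigma,U^{c}) - \sum_{i=1}^{d} X_{i}f(\sigma)\int_{U^{c}} x_{i}(\tau)\,\mu(\sigma,d\tau).
\]
The last two terms each contain the factor $f(\sigma)$ or $X_{i}f(\sigma)$, and the integrals appearing in them are finite (bounded by $\|x_{i}\|_{\infty}\,\mu(\sigma,U^{c})$); hence these two terms are supported in $K$ and vanish at infinity. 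For the first term I would use the crude uniform bound $\left|\int_{U^{c}} f(\sigma\tau)\,\mu(\sigma,d\tau)\right| \le \|f\|_{\infty}\,\mu(\sigma,U^{c})$ and invoke hypothesis (iii), $\lim_{\sigma\to\infty}\mu(\sigma,U^{c}) = 0$, to conclude that it tends to $0$ as $\sigma \to \infty$. Combining the three pieces gives $\lim_{\sigma\to\infty}Af(\sigma) = 0$.

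I expect the only real obstacle to be this last term $\int_{U^{c}} f(\sigma\tau)\,\mu(\sigma,d\tau)$: unlike the others it does not vanish for large $\sigma$, since for a suitably large jump $\tau$ the point $\sigma\tau$ can fall back into $K$, so no support argument is available and one is forced to control the total mass of the large jumps uniformly in the translate --- which is exactly what (iii) supplies. The remaining care is bookkeeping: choosing $U$ relatively compact so that $K\overline{U}^{-1}$ is compact, and checking absolute integrability before splitting the $A_{3}$ integral.
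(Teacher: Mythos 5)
Your proposal is correct and follows essentially the same route as the paper's proof: the same decomposition $A = A_{1}+A_{2}+A_{3}$, the same support arguments showing that $A_{1}f$ and $A_{2}f$ vanish outside the compact sets $\supp(f)$ and $\supp(f)\,\overline{U}^{-1}$ respectively, and the same use of hypothesis (iii) via a bound of the form $|A_{3}f(\sigma)| \leq K_{f}\,\mu(\sigma, U^{c})$. Your splitting of $A_{3}f$ into three terms (two of which have compact support) is an immaterial refinement of the paper's single uniform constant $K_{f}$.
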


\begin{proof} Since differential operators cannot increase supports, it is sufficient to consider the operator $A_{2} + A_{3}$.
We can and will assume that $U$ has compact closure $\overline{U}$. Let $f \in C_{c}^{\infty}(G)$ with $C :=$supp$(f)$. Since $B_{1}:= C\overline{U^{-1}}$ is compact and $C \subseteq B_{1}$, we have
$Hf(\sigma,\tau) = 0$ for all $\sigma \in B_{1}^{c}, \tau \in U$, and hence $A_{2}f(\sigma) = 0$ for all $\sigma \in B_{1}^{c}$.

	We easily compute that there exists $K_{f} > 0$ so that for all $\sigma \in G$,
	\begin{equation*}
	|A_{3}f(\sigma)| \leq K_{f} \cdot \mu(\sigma, U^c).
	\end{equation*}
	Using condition (iii), there is a compact set $B_{2}$ such that for all $\sigma\in B_{2}^c$,
	$$
	 |A_{3}f(\sigma)| < \varepsilon
	$$
	Since $B_{1} \cup B_{2}$ is compact, we get
	$$
	|A_{2}f(\sigma) + A_{3}f(\sigma)| < \varepsilon \qquad \text{for all } \sigma \in (B_{1} \cup B_{2})^c,
	$$
	and the proof is complete.

\end{proof}

Combining the results of the last two theorems, we obtain

\begin{cor} \label{nice}
If $A: C_{c}^{\infty}(G) \rightarrow {\mathcal F}(G)$ satisfies the positive maximum principle and conditions (i) and (ii) from Theorem \ref{contA} and condition (iii) from Theorem \ref{vanishinf} are satisfied, then $A$ maps $C^\infty_c(G)$ to $C_0(G)$.
\end{cor}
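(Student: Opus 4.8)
The plan is to obtain the result as a direct synthesis of the two preceding theorems, since between them they already supply both of the defining properties of membership in $C_0(G)$. Recall from the notation fixed in the Introduction that $C_0(G)$ consists precisely of those real--valued functions on $G$ that are continuous \emph{and} vanish at infinity. Thus it suffices to verify these two properties separately for $Af$, where $f \in C_c^\infty(G)$ is arbitrary.

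First I would invoke Theorem \ref{contA}. Conditions (i) and (ii) are among the hypotheses of the corollary, and $A$ is assumed to satisfy the PMP, so that theorem applies verbatim and yields $\Ran(A) \subseteq C(G)$; in particular $Af$ is continuous on the whole of $G$. Next I would invoke Theorem \ref{vanishinf}. Condition (iii) is likewise assumed, and again $A$ satisfies the PMP, so that theorem applies and gives that $Af$ vanishes at infinity. Combining the two conclusions, $Af$ is a continuous function on $G$ that vanishes at infinity, hence $Af \in C_0(G)$. As $f$ was arbitrary, $A$ maps $C_c^\infty(G)$ into $C_0(G)$, which is the assertion.

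The only point requiring a moment's care --- and it is genuinely minor --- is that the two theorems be simultaneously applicable to the same operator $A$. This is immediate: all three of conditions (i), (ii) and (iii) are hypotheses of the corollary, and each of Theorems \ref{contA} and \ref{vanishinf} requires only the PMP together with its own subset of these conditions, so no conflict or additional compatibility check arises. I therefore do not expect any substantive obstacle here; the real work has already been carried out in establishing the two constituent theorems (in particular the delicate dominated--convergence and Taylor--expansion estimates controlling the non--local terms $A_2$ and $A_3$), and the corollary is simply their conjunction matched against the definition of $C_0(G)$.
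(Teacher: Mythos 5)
Your proof is correct and follows exactly the paper's route: the paper states the corollary with no further argument beyond ``combining the results of the last two theorems,'' which is precisely the synthesis you carry out. Your write-up merely makes explicit the (entirely routine) observation that continuity from Theorem \ref{contA} plus vanishing at infinity from Theorem \ref{vanishinf} together give membership in $C_{0}(G)$.
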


\vspace{5pt}

{\bf Remark}. If we work in a local co--ordinate system, then after some routine manipulations, we can rewrite (\ref{PMP2}) in the form obtained in \cite{BCP}.


\section{The Generalised Courr\`{e}ge Theorem, Feller Semigroups and a Killed Hunt's formula}

Let $(\Omega, \mathcal{F}, \mathbb{P})$ be a probability space where the $\sigma$-algebra $\mathcal{F}$ is equipped with a filtration $(\mathcal{F}_t)_{t\geq 0}$. We will consider a Markov process $(Y_t)_{t\geq 0}$ with respect to the filtration $(\mathcal{F}_t)_{t\geq 0}$, taking values in $G$. We will denote the transition probabilities of $(Y_t)_{t\geq 0}$ by ${p_t(\sigma, A):= P(Y_t\in A| Y_0= \sigma)}$ for all $A\in \mathcal{B}(G)$ and $\sigma\in G$. We then have a one-parameter contraction semigroup of operators on $B_b(G)$ given for each $f \in B_{b}(G), \sigma \in G$ by
\begin{equation}\label{1.Tf}
T_t f (\sigma) = \int_G f(\tau) p_t(\sigma, d\tau).
\end{equation}
The family of operators $(T_t)_{t\geq 0}$ is called a \textit{Feller semigroup} if
\begin{itemize}
	\item[1)] $T_tC_0(G)) \subseteq C_0(G)$ for all $t\geq 0$,
	\item[ii)] $\displaystyle \lim_{t\to 0} \|T_tf -f \|_\infty = 0$ for all $f\in C_0(G)$.
\end{itemize}
In this case we say that $(Y_t)_{t\geq 0}$ is a Feller process. We will denote by $A$ the infinitesimal generator of  the Feller semigroup $(T_t)_{t\geq 0}$. The following lemma is well-known, see \cite{Jac1}, p.332. We will include the proof for completeness.
\begin{lemma}\label{H.Generator_PMP}
	If $(T_t)_{t\geq 0}$ is a Feller semigroup with generator $A$ such that $C_c^\infty(G) \subseteq \text{Dom}(A)$, then $A$ satisfies the positive maximum principle.
\end{lemma}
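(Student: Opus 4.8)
The plan is to show directly that the generator $A$ of a Feller semigroup satisfies the PMP, using the semigroup structure together with the positivity of the transition operators $T_t$. The key facts I would exploit are: (a) each $T_t$ is a positivity-preserving contraction on $C_0(G)$, since $p_t(\sigma,\cdot)$ is a sub-probability measure; (b) the generator is defined by $Af = \lim_{t\to 0} \frac{1}{t}(T_t f - f)$, with the limit taken in the supremum norm for $f \in \Dom(A)$; and (c) evaluation of this limit at a fixed point $\rho$ commutes with the norm limit, so $Af(\rho) = \lim_{t\to 0}\frac{1}{t}(T_tf(\rho) - f(\rho))$ as a pointwise limit.

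\medskip

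\noindent First I would take $f \in C_c^\infty(G) \subseteq \Dom(A)$ with $f(\rho) = \sup_{\tau \in G} f(\tau) \geq 0$, and estimate $T_t f(\rho)$ from above. Since $p_t(\rho,\cdot)$ is a sub-probability measure and $f(\tau) \leq f(\rho)$ for all $\tau$, I would write
\begin{equation*}
T_t f(\rho) = \int_G f(\tau)\, p_t(\rho, d\tau) \leq f(\rho) \int_G p_t(\rho, d\tau) = f(\rho)\, p_t(\rho, G) \leq f(\rho),
\end{equation*}
where the last inequality uses $p_t(\rho,G) \leq 1$ together with $f(\rho) \geq 0$. This yields $T_t f(\rho) - f(\rho) \leq 0$ for every $t > 0$. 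Dividing by $t > 0$ preserves the inequality, so $\frac{1}{t}(T_t f(\rho) - f(\rho)) \leq 0$ for all $t > 0$.

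\medskip

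\noindent Finally I would pass to the limit $t \to 0$. Because $f \in \Dom(A)$, the difference quotients converge in supremum norm to $Af$, and norm convergence implies pointwise convergence at $\rho$; hence
\begin{equation*}
Af(\rho) = \lim_{t \to 0^+} \frac{1}{t}\bigl(T_t f(\rho) - f(\rho)\bigr) \leq 0,
\end{equation*}
since a limit of non-positive quantities is non-positive. This is exactly the PMP for $A$, and the proof is complete.

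\medskip

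\noindent \textbf{Main obstacle.} The only delicate point is justifying that $f(\rho) \geq 0$ is genuinely used to control the total mass term: if $f(\rho)$ were negative the bound $f(\rho)\,p_t(\rho,G) \leq f(\rho)$ would fail, which is precisely why the PMP hypothesis includes $\sup f = f(\rho) \geq 0$. The rest is essentially routine once one recalls that Feller transition operators are sub-Markovian (sub-probability, not necessarily probability) and that the generator limit is a supremum-norm limit, so pointwise evaluation is continuous with respect to it. No deep structure of the Lie group $G$ enters — the argument is purely about positivity and contractivity of the semigroup.
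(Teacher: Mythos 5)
Your proof is correct and follows essentially the same route as the paper: evaluate the difference quotient $\frac{1}{t}\bigl(T_tf(\rho)-f(\rho)\bigr)$ at the maximum point, use positivity and (sub-)Markovianity of $p_t(\rho,\cdot)$ to see it is non-positive for every $t>0$, and pass to the limit defining the generator. The only difference is cosmetic: the paper, having defined $p_t(\rho,\cdot)$ as genuine transition probabilities, can write $T_tf(\rho)-f(\rho)=\int_G\left[f(\tau)-f(\rho)\right]p_t(\rho,d\tau)\leq 0$ without invoking $f(\rho)\geq 0$, whereas your argument correctly isolates where that hypothesis is needed in the sub-probability (sub-Markovian) case.
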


\begin{proof}
	Let $f\in C_c^\infty(G)$ be such that $\displaystyle f(g_0) = \sup_{\sigma\in G} f(\sigma) \geq 0$ for some $g_0\in G$. We have
	\begin{equation*}
	Af(g_0) = \lim_{t\to 0} \frac{(T_t f - f)(g_0)}{t}  =\lim_{t\to 0} \frac{1}{t} \int_G \left[f(\tau) - f(g_0) \right]p_t(g_0, d\tau) \leq 0.
	\end{equation*}
	This proves that A satisfies the PMP.
\end{proof}

Since $A$ satisfies the PMP, we may associate a unique L\'{e}vy kernel to it by Theorem \ref{PMP1}. We will now explore the relationship between the transition probabilities of the Feller process and this associated L\'evy kernel. This generalises a well-known result about convolution semigroups in $\R^d$ (see \cite{Sa}, Corollary 8.9, p.45), which has recently been extended to the current context, in the case where $G = \R^{d}$, in Theorem 3.2 of \cite{KS}.
\begin{prop}
	Let $(p_t)_{t\geq 0}$ be the transition probabilities associated to a Feller process, with Feller semigroup $(T_t)_{t\geq 0}$ and generator $A$ which has associated L\'{e}vy kernel $\mu$. Assume that $C^\infty_c(G)\subseteq \text{Dom}(A)$, then
	\begin{equation*}
	\lim_{t \to 0}\frac{1}{t} \int_G f(\tau) p_t(g, d\tau) = \int_{G} f(g\tau) \mu(g, d\tau)
	\end{equation*}
	for all $g\in G$ and $f\in C^\infty_c(G)$ vanishing on a neighbourhood of $g$, where $\mu$ is the L\'evy kernel associated to $A$.
\end{prop}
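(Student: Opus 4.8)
The plan is to reduce the statement to the Courr\`ege representation (\ref{PMP2}) by exploiting the hypothesis that $f$ vanishes on a neighbourhood of $g$. First I would note that this hypothesis annihilates all the local data of $f$ at the point $g$: if $f \equiv 0$ on some open neighbourhood $V$ of $g$, then $f(g) = 0$ and $X^{\alpha}f(g) = 0$ for every multi--index $\alpha$, in particular $X_{i}f(g) = 0$ and $X_{j}X_{k}f(g) = 0$ for all $i,j,k = 1, \ldots, d$. This single observation is what makes the calculation collapse.

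Next I would identify the left--hand side with $Af(g)$. Since $A$ is the generator of the Feller semigroup and $f \in C^{\infty}_{c}(G) \subseteq \Dom(A)$, we have $\lim_{t\to 0}\|t^{-1}(T_{t}f - f) - Af\|_{\infty} = 0$, and uniform convergence forces pointwise convergence, so $Af(g) = \lim_{t \to 0} t^{-1}(T_{t}f(g) - f(g))$. Inserting the representation (\ref{1.Tf}) of $T_{t}$ and using $f(g) = 0$, the right--hand side becomes exactly $\lim_{t \to 0} t^{-1}\int_{G}f(\tau)\,p_{t}(g, d\tau)$, which is the quantity appearing on the left of the claimed identity.

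I would then evaluate $Af(g)$ via the generalised Courr\`ege theorem. By Lemma \ref{H.Generator_PMP}, $A$ satisfies the PMP, so by Theorem \ref{PMP1} the value $Af(g)$ is given by (\ref{PMP2}). Substituting $f(g) = 0$, $X_{i}f(g) = 0$ and $X_{j}X_{k}f(g) = 0$ kills the term $-c(g)f(g)$ together with the first-- and second--order differential terms, and it also annihilates both compensators $-f(g)$ and $-\sum_{i}x_{i}(\tau)X_{i}f(g)$ inside the integrand. What survives is precisely $Af(g) = \int_{G}f(g\tau)\,\mu(g, d\tau)$, the right--hand side of the claim, and comparing the two expressions for $Af(g)$ finishes the argument.

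The argument is essentially immediate once one sees that vanishing near $g$ removes every local (differential and compensator) contribution; the only point requiring a line of care is the finiteness of the surviving integral, which I do not expect to be a genuine obstacle. Since $\tau \mapsto f(g\tau)$ is bounded, has compact support, and vanishes on the neighbourhood $g^{-1}V$ of $e$ (because $g\tau \in V$ iff $\tau \in g^{-1}V$), I would choose a canonical co--ordinate neighbourhood $U$ small enough that $U \subseteq g^{-1}V$; then the integral reduces to one over $U^{c}$ and $\left|\int_{U^{c}}f(g\tau)\,\mu(g, d\tau)\right| \leq \|f\|_{\infty}\,\mu(g, U^{c}) < \infty$ by the L\'evy--measure property of $\mu(g, \cdot)$. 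Equivalently, one may simply invoke the finiteness of the integral term in (\ref{PMP2}) already established in the proof of Theorem \ref{PMPLF}.
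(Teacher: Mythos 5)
Your proposal is correct and follows essentially the same route as the paper: both identify the limit with $Af(g)$ via the generator definition (using $f(g)=0$), and then express $Af(g)$ as $\int_{G} f(g\tau)\,\mu(g,d\tau)$ using the PMP machinery, noting that vanishing near $g$ kills all local terms. The only cosmetic difference is that the paper cites the intermediate identity (\ref{f_mu}) applied to the functional $A_g$ (via Lemma \ref{AgPMP}), whereas you cite the packaged representation (\ref{PMP2}) of Theorem \ref{PMP1} and observe that the differential and compensator terms vanish --- the same argument invoked one level up.
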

\begin{proof}
	By definition, we have for all $f\in C^\infty_c(G)$, $g\in G$
	\begin{equation}\label{eq57}
	Af(g) = \lim_{t \to 0}\frac{1}{t} \int_G (f(\tau)-f(g)) p_t(g, d\tau)
	\end{equation}
	Given that the generator $A$ satisfies the PMP, by Lemma \ref{AgPMP} for all $g\in G$ the distribution $A_g$ also satisfies the PMP, where $A_g$ is defined in (\ref{Ag}).
	Using (\ref{f_mu}) we have that for all $g\in G$ and $f\in C^\infty_c(G)$, $A_gf = \int_{G} f(\tau) \;\mu(g, d\tau) $ where $\mu(g, \cdot)$ is a L\'evy measure.
	Thus, for all $g\in G$ and $f\in C^\infty_c(G)$ we have
	\begin{equation}\label{eq58}
	Af(g) = A_g(L_gf) = \int_{G} f(g\tau) \;\mu(g, d\tau)
	\end{equation}
	In particular when $f\in C^\infty_c(G)$ vanishes on a neighbourhood of $g\in G$, from (\ref{eq57}) and $(\ref{eq58})$ we get
	\begin{equation*}
	\lim_{t \to 0}\frac{1}{t} \int_G f(\tau) p_t(g, d\tau)= \int_{G} f(g\tau) \;\mu(g, d\tau).
	\end{equation*}
\end{proof}

A finite Borel measure on $G$ is a sub--probability measure if its total mass does not exceed one, and the family $(\rho_t)_{t\geq 0}$ of sub--probability measures  on $(G, \mathcal{B}(G))$, is called a \textit{convolution semigroup of sub--probability measures} if
	\begin{itemize}
		\item[i)]$\rho_{s+t} = \rho_s \ast \rho_t, \qquad \text{for all } s,t \geq 0$
		\item[ii)] $\rho_0 = \delta_e$,
		\item[iii)] $\displaystyle \lim_{t \to 0} \rho_t = \delta_e$, in the sense of weak convergence,
	\end{itemize}
where $*$ denotes the usual convolution of measures (see e.g. section 4.1 of \cite{App1} pp.82--3).   We have a convolution semigroup of probability measures if $\rho_{t}(G) = 1$ for all $t \geq 0$. The latter arise as the laws of L\'evy processes on $G$, i.e. processes with stationary and independent increments that are stochastically continuous, see e.g. p.10 in \cite{Liao}. A L\'evy process is a Feller process and the contraction semigroup, $(T_t)_{t\geq 0}$ defined on $C_0(G)$, is called a \textit{Hunt  semigroup} and is defined for all $t \geq 0, f \in C_{0}(G), \sigma \in G$ by
\begin{equation} \label{Husemi}
T_t f(\sigma) = \int_G f(\sigma \tau) \rho_t(d\tau),
\end{equation}
If we compare this to (\ref{1.Tf}), we see that
$$p_t(\sigma, A) =\rho_t(\sigma^{-1}A)$$
for all $\sigma\in G$ and $A\in \mathcal{B}(G)$.
The infinitesimal generator ${\mathcal L}$ of a Hunt semigroup is called the \textit{Hunt generator}.
From the definition of the Hunt semigroup it follows that $L_\sigma T_t = T_t L_\sigma$ for all $\sigma \in G$ and $t\geq 0$. Therefore, since by Lemma 5.3.2 in \cite{App1}, for all $f\in \text{Dom}({\mathcal L}), \sigma \in G$, $L_\sigma f \in \text{Dom}({\mathcal L})$  then  $L_\sigma {\mathcal L} f = {\mathcal L} L_\sigma f $.

The Hunt semigroups are precisely the (left) translation invariant Feller semigroups, as the next result shows.
\begin{prop} \label{Hu2}
If $(T_{t}, t \geq 0)$ is a Feller semigroup in $C_{0}(G)$ such that $p_{0}(e, \cdot) = \delta_{e}(\cdot)$ and $L_{\sigma}T_{t} = T_{t}L_{\sigma}$ for all $t \geq 0, \sigma \in G$, then $(T_{t}, t \geq 0)$ is the Hunt semigroup associated with a convolution semigroup of probability measures.
\end{prop}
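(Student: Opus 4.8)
The plan is to exhibit the convolution semigroup explicitly by evaluating the semigroup at the neutral element $e$, and then to read off the Hunt formula (\ref{Husemi}) directly from the hypothesised left--invariance. First I would define, for each $t \geq 0$, the Borel measure $\rho_{t} := p_{t}(e, \cdot)$. Being the law of $Y_{t}$ started from $e$, this is a probability measure on $(G, {\mathcal B}(G))$; equivalently it is the measure representing the positive linear functional $f \mapsto T_{t}f(e)$ on $C_{0}(G)$ supplied by the Riesz representation theorem. The hypothesis $p_{0}(e, \cdot) = \delta_{e}$ then gives axiom (ii), namely $\rho_{0} = \delta_{e}$, at once.

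The key step is the recovery of the Hunt formula. For $f \in C_{0}(G)$ and $\sigma \in G$ I would combine the elementary identity $h(\sigma) = (L_{\sigma}h)(e)$ with the commutation hypothesis $L_{\sigma}T_{t} = T_{t}L_{\sigma}$ to obtain
\begin{equation*}
T_{t}f(\sigma) = (L_{\sigma}T_{t}f)(e) = (T_{t}L_{\sigma}f)(e) = \int_{G}(L_{\sigma}f)(\tau)\,\rho_{t}(d\tau) = \int_{G}f(\sigma\tau)\,\rho_{t}(d\tau),
\end{equation*}
which is exactly (\ref{Husemi}); in particular $p_{t}(\sigma, A) = \rho_{t}(\sigma^{-1}A)$ for all $\sigma \in G$ and $A \in {\mathcal B}(G)$. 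This single line is the structural heart of the argument, turning left--invariance into the convolution representation.

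With this representation in hand, the remaining convolution--semigroup axioms follow by transporting the properties of $(T_{t})_{t \geq 0}$. For axiom (i) I would feed the semigroup law $T_{s+t} = T_{s}T_{t}$ through the Hunt formula: evaluating $(T_{s}T_{t}f)(e)$, inserting $T_{t}f(\eta) = \int_{G}f(\eta\tau)\rho_{t}(d\tau)$, and applying Fubini yields $\int_{G}f\,d\rho_{s+t} = \int_{G}\int_{G}f(\eta\tau)\,\rho_{s}(d\eta)\rho_{t}(d\tau)$, which is precisely the statement $\rho_{s+t} = \rho_{s} * \rho_{t}$ (alternatively one may invoke Chapman--Kolmogorov). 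For axiom (iii) the strong continuity of the Feller semigroup gives $\int_{G}f\,d\rho_{t} = T_{t}f(e) \to f(e) = \int_{G}f\,d\delta_{e}$ as $t \to 0$, for every $f \in C_{0}(G)$.

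The main obstacle will be axiom (iii). Strong continuity only delivers convergence tested against $C_{0}(G)$, whereas weak convergence of measures is convergence tested against $C_{b}(G)$; a priori mass could escape to infinity. I would close this gap by observing that every $\rho_{t}$, and the limit $\delta_{e}$, are probability measures, so that the total masses already agree; vague convergence to a probability measure with matching total mass upgrades to genuine weak convergence. Having verified all three axioms and noting that each $\rho_{t} = p_{t}(e, \cdot)$ has total mass one, I conclude that $(\rho_{t})_{t \geq 0}$ is a convolution semigroup of probability measures and that $(T_{t})_{t \geq 0}$ is its associated Hunt semigroup, which completes the proof.
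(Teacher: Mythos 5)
Your proof is correct and follows essentially the same route as the paper, which does not spell out an argument but simply invokes the Euclidean case (Theorem 3.3.1 of \cite{Appbk}): there, too, one sets $\rho_{t} = p_{t}(e,\cdot)$, uses translation invariance to recover the Hunt formula $T_{t}f(\sigma) = \int_{G}f(\sigma\tau)\rho_{t}(d\tau)$, and then transports the semigroup law and strong continuity into the convolution axioms. Your explicit handling of the vague--versus--weak convergence point (using that all the $\rho_{t}$ and $\delta_{e}$ are probability measures, so no mass escapes) is exactly the standard way this is closed in the cited argument.
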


This is proved exactly as in the Euclidean case (see e.g. Theorem 3.3.1 in \cite{Appbk} pp.161--2).

We have the following celebrated classification of the Hunt generator, originally due to Hunt \cite{Hu}. First we define the space
$$ C_{0}^{(2)}(G):= \{f \in C_{0}(G); X_{i}f \in C_{0}(G)~\mbox{and}~X_{j}X_{k}f \in C_{0}(G)~\mbox{for all}~1 \leq i,j,k \leq d\}.$$
Clearly $C_{c}^{\infty}(G) \subseteq C_{0}^{(2)}(G)$, and so $C_{0}^{(2)}(G)$ is dense in $C_{0}(G)$.

\begin{theorem}[Hunt's theorem] \label{Hunt}
If $(\rho_{t}, t \geq 0)$ is a convolution semigroup of probability measures
in $G$ with generator ${\mathcal L}$, then
\begin{enumerate}
\item $C_{0}^{(2)}(G) \subseteq \mbox{Dom}({\mathcal L})$. \item For each
$\sigma \in G, f \in C_{0}^{(2)}(G)$,
\begin{eqnarray} \label{hu}
{\mathcal L} f(\sigma) & = & \sum_{i=1}^{d}b_{i}X_{i}f(\sigma) +
\sum_{i,j=1}^{d}a_{ij}X_{i}X_{j}f(\sigma)\nonumber \\
 & + & \int_{G}\left(f(\sigma \tau) - f(\sigma) -
   \sum_{i=1}^{d}x_{i}(\tau)X_{i}f(\sigma)\right)\nu(d\tau), \nonumber \\
   & &
\end{eqnarray}

where $b = (b_{1}, \ldots, b_{d}) \in {\R}^{d}, a = (a_{ij})$ is a
non-negative definite, symmetric $d \times d$ real-valued matrix
and $\nu$ is a L\'{e}vy measure on $G$.
\end{enumerate}
Conversely, any linear operator with a representation as in
(\ref{hu}) is the restriction to $C_{0}^{(2)}(G)$ of the Hunt generator\index{Hunt generator}
corresponding to a unique convolution semigroup of probability
measures.
\end{theorem}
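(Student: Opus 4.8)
The plan is to combine the general theory already in place with the two features that single out a Hunt semigroup among Feller semigroups: left translation invariance and conservation of mass. By Proposition \ref{Hu2} the Hunt semigroup satisfies $L_\sigma T_t = T_t L_\sigma$, and hence its generator obeys $L_\sigma \mathcal L = \mathcal L L_\sigma$ on $\text{Dom}(\mathcal L)$ (the commutation relation noted just before the theorem). Granting for the moment that $C_c^\infty(G) \subseteq \text{Dom}(\mathcal L)$, Lemma \ref{H.Generator_PMP} shows $\mathcal L$ satisfies the PMP, so by Lemma \ref{AgPMP} each functional $\mathcal L_g$ of (\ref{Ag}) is a PMP functional. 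The decisive simplification is that translation invariance makes $\mathcal L_g$ independent of $g$: using $\mathcal L L_{g^{-1}} = L_{g^{-1}}\mathcal L$ we get $\mathcal L_g \varphi = (L_g \mathcal L L_{g^{-1}})\varphi(e) = \mathcal L\varphi(e) = \mathcal L_e\varphi$ for all $g$. Writing $\Lambda := \mathcal L_e$ and applying Theorem \ref{PMPLF} to $\Lambda$ produces constants $c \ge 0$, $b \in \R^d$, a symmetric non--negative matrix $a$, and a Lévy measure $\nu$ with the canonical form (\ref{distr_form}); reconstructing $\mathcal L f(\sigma) = \Lambda(L_\sigma f)$ and using the left--invariance of the $X_i$ (so that $X_i(L_\sigma f)(e) = X_i f(\sigma)$) reassembles this into (\ref{hu}), now with $\sigma$--independent characteristics.

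It remains, within the forward direction, to eliminate the killing term. The constant $c$ in (\ref{distr_form}) was extracted in the proof of Theorem \ref{PMPLF} by testing against an increasing sequence $\varphi_k \uparrow \mathbf{1}_G$ with $\varphi_k = 1$ near $e$, where $\Lambda\varphi_k \to -c$. Here I would run the same computation but observe that, because each $\rho_t$ is a genuine probability measure, $t^{-1}\int_G(\varphi_k(\tau)-\varphi_k(e))\rho_t(d\tau) \to 0$, forcing $c = 0$; this is precisely the place where ``probability'' rather than ``sub--probability'' is used, and it explains why (\ref{hu}) carries no $-cf$ term.

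The genuine obstacle is the domain statement itself, that $C_0^{(2)}(G) \subseteq \text{Dom}(\mathcal L)$, i.e. that for $f \in C_0^{(2)}(G)$ the difference quotients $t^{-1}(T_tf - f)$ converge in the supremum norm; the distributional machinery above only controls the pointwise functional at $e$ and presupposes membership in the domain. This is the analytic core of Hunt's theorem. By translation invariance it suffices to analyse $t^{-1}\int_G (f(\tau)-f(e))\rho_t(d\tau)$, and Taylor's theorem on $G$ (the expansion already used in Theorems \ref{order2} and \ref{PMP1}) reduces the problem to the existence of the infinitesimal limits: the vague convergence of $t^{-1}\rho_t$ to a Lévy measure $\nu$ on $G^* = G\setminus\{e\}$, together with the convergence of the truncated moments $t^{-1}\int_U x_i(\tau)\rho_t(d\tau) \to b_i$ and $t^{-1}\int_U x_i(\tau)x_j(\tau)\rho_t(d\tau)$. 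I would establish these limits from the semigroup (Chapman--Kolmogorov) relation by subadditivity and superadditivity arguments of Fekete type, and control the Taylor remainder uniformly using the Lévy bounds $\nu(U^c) < \infty$ and $\int_U \sum_i x_i^2\, d\nu < \infty$ guaranteed by Theorem \ref{PMPLF}. The passage from $C_c^\infty(G)$ to all of $C_0^{(2)}(G)$ then follows by approximating $f$ together with its first and second $X$--derivatives and invoking the closedness of $\mathcal L$ together with the continuity of the right--hand side of (\ref{hu}) in those data.

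For the converse and the uniqueness claim I would argue in the standard way. Given $b$, $a$ and a Lévy measure $\nu$, the operator defined by (\ref{hu}) is densely defined on $C_0^{(2)}(G)$, satisfies the PMP (hence is dissipative), and is left--invariant by construction; verifying the range condition of the Hille--Yosida--Ray theorem shows that its closure generates a positivity preserving contraction semigroup, i.e. a Feller semigroup, which commutes with every $L_\sigma$, so by Proposition \ref{Hu2} it is the Hunt semigroup of a convolution semigroup of probability measures. Uniqueness of that convolution semigroup follows since the triple $(b,a,\nu)$ is uniquely read off from the generator, again by the uniqueness assertion at the end of the proof of Theorem \ref{PMPLF}, and a convolution semigroup is determined by its generator.
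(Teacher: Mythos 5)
First, a point of orientation: the paper does not prove Theorem \ref{Hunt} at all --- it imports it as a classical result and refers the reader to \cite{Hu}, section 4.2 of \cite{He1}, or section 3.1 of \cite{Liao}. So your attempt must be judged on its own terms, and as a proof it has genuine gaps. The part you carry out in detail is correct and, in fact, mirrors what the paper itself does later in Theorem \ref{Hu1}: left invariance gives $\mathcal{L}_g=\mathcal{L}_e$ for all $g$, so Theorem \ref{PMPLF} yields constant characteristics and the form (\ref{hu}) with an extra killing term $-cf$, \emph{provided} one already knows $C_c^\infty(G)\subseteq \mathrm{Dom}(\mathcal{L})$. But that proviso is part of what is to be proved, and the two items constituting the real content of Hunt's theorem are exactly the ones you leave as programmatic sketches. (i) The inclusion $C_0^{(2)}(G)\subseteq\mathrm{Dom}(\mathcal{L})$ requires \emph{uniform} convergence of $t^{-1}(T_tf-f)$ and the existence of the infinitesimal limits ($t^{-1}\rho_t\to\nu$ vaguely on $G\setminus\{e\}$, $t^{-1}\int_U x_i\,d\rho_t\to b_i$, and the second--moment limits); your appeal to ``subadditivity and superadditivity arguments of Fekete type'' is a hope, not an argument --- the proofs in the cited sources need compactness/selection arguments for the family $(t^{-1}\rho_t)$, uniform moment bounds near $e$, and a separate approximation step to pass from $C_c^\infty(G)$ to $C_0^{(2)}(G)$. (ii) In the converse direction, ``verifying the range condition of the Hille--Yosida--Ray theorem'' \emph{is} the hard analytic content, not a routine check; the references instead construct the semigroup (equivalently, the convolution semigroup or the L\'{e}vy process) explicitly, and one must also establish conservativity --- that the resulting measures are probability rather than sub--probability measures --- before Proposition \ref{Hu2} can be invoked. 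Your uniqueness argument likewise needs $C_c^\infty(G)$ (or $C_0^{(2)}(G)$) to be a \emph{core} for the generator, which is precisely the point the paper handles in Theorem \ref{Hu1} by citing Theorem 5.3.4 of \cite{Appbk}; without it, two closed extensions of the operator (\ref{hu}) could a priori generate different semigroups.

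Second, the one place where you offer a specific new argument --- eliminating the killing term --- is wrong as stated. For fixed $k$, the quantity $\lim_{t\to 0}t^{-1}\int_G(\varphi_k(\tau)-\varphi_k(e))\rho_t(d\tau)$ is by definition $\mathcal{L}\varphi_k(e)$, and by the canonical form this equals $-c-\int_G(1-\varphi_k(\tau))\,\nu(d\tau)$, which is in general strictly negative; the fact that each $\rho_t$ has total mass one only gives the sign $\leq 0$, not vanishing. If instead you intend the iterated limit ($t\to 0$, then $k\to\infty$), then asserting it vanishes ``because $\rho_t$ is a probability measure'' is circular: that assertion \emph{is} the claim $c=0$. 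Conservativity does force $c=0$, but only via a genuine interchange--of--limits argument, e.g.: choose $\varphi_k\uparrow\mathbf{1}_G$, equal to $1$ on an exhausting sequence of compacts, with $X_i\varphi_k$ and $X_iX_j\varphi_k$ uniformly bounded in $k$ (such sequences exist, by smoothing indicator functions with a fixed mollifier); integrate the semigroup identity $T_t\varphi_k(e)-\varphi_k(e)=\int_0^t T_s\mathcal{L}\varphi_k(e)\,ds$; observe that $\mathcal{L}\varphi_k$ is uniformly bounded and converges pointwise to $-c$; then let $k\to\infty$ using $\rho_t(G)=1$ on the left and dominated convergence on the right to conclude $0=-ct$. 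Without such an argument, your forward direction establishes only the killed form (\ref{Huntf}) of Theorem \ref{Hu1}, not (\ref{hu}).
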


For a proof of this result see \cite{Hu}, section 4.2 of \cite{He1} pp.259--69 or section 3.1 of \cite{Liao} pp.52--61.

 Let $c > 0$ and $(\rho_{t}, t \geq 0)$ be a convolution semigroup of probability measures in $G$,  and consider the family of measures $(\widetilde{\rho}_{t}, t \geq 0)$ where for each $t \geq 0, \widetilde{\rho}_{t} = e^{-ct}\rho_{t}$. Then $(\widetilde{\rho}_{t}, t \geq 0)$ is a convolution semigroup of sub--probability measures, since for each $t > 0, \widetilde{\rho}_{t}(G) = e^{-ct} < 1$. We obtain a $C_{0}$--contraction semigroup $(S_{t}, t \geq 0)$ on $C_{0}(G)$ given by $S_{t} = e^{-ct}T_{t}$ for each $t \geq 0$, and we have $S_{t}f(\sigma) = \int_{G}f(\sigma \tau)\widetilde{\rho}_{t}(d\tau)$ for all $f \in C_{0}(G), \sigma \in G$. Clearly the action of the infinitesimal generator ${\mathcal M}$ of this semigroup is given by
${\mathcal M}f = -cf + {\mathcal L}f$, for all $f \in C_{0}^{(2)}(G)$. Note that $(S_{t}, t \geq 0)$ is not a Feller semigroup in our sense. It can be naturally associated with a {\it killed L\'{e}vy process} whose state space is the one--point compactification of $G$, and $c$ is then interpreted as a killing rate. For details see e.g. \cite{Appbk} pp.405--6. We thus call ${\mathcal M}$ a {\it killed Hunt generator}. It is clear, e.g. by the argument of the proof of Lemma \ref{H.Generator_PMP} that it satisfies the PMP. In the converse direction, we have the following result.

\begin{theorem} \label{Hu1}
If $A:C_{c}^{\infty}(G) \rightarrow C(G)$ satisfies the positive maximum principle and is such that $L_{g}Af = AL_{g}f$ for all $f \in C^\infty_c(G), g \in G$ then
\begin{eqnarray} \label{Huntf}
Af(g) & = & \sum_{i,j =1}^d a_{ij}X_iX_j f(g) + \sum_{i=1}^d b_i X_if(g) - cf(g) \nonumber \\
& + &\int_{G} \left(f(g\tau)-f(g) - \sum_{i=1}^d x_i(\tau) X_if(g)\right) \mu(d\tau),
\end{eqnarray}
where $\{a_{ij}\}$ is a non-negative definite, symmetric matrix, ${(b_1, \dots, b_d)\in \R^d}, c \geq 0$ and $\mu$ is a L\'evy measure on $(G, \mathcal{B}(G))$. Furthermore $A$ extends to the killed Hunt generator associated to a unique convolution semigroup of sub--probability measures on $G$.
\end{theorem}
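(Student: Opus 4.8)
The plan is to exploit the left translation invariance to collapse the whole family of associated functionals $\{A_g : g \in G\}$ to the single functional $A_e$, to read off the form of $A_e$ from Theorem \ref{PMPLF}, and finally to identify the resulting operator with a killed Hunt generator via Theorem \ref{Hunt}.

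First I would record the key consequence of the hypothesis $L_g A = A L_g$ on $C_c^\infty(G)$. Replacing $g$ by $g^{-1}$ gives $A L_{g^{-1}} = L_{g^{-1}} A$, so that for every $\varphi \in C_c^\infty(G)$,
\[
A_g \varphi = (L_g A L_{g^{-1}})\varphi(e) = (L_g L_{g^{-1}} A)\varphi(e) = A\varphi(e) = A_e\varphi .
\]
Thus $A_g = A_e$ for all $g \in G$. Since $A$ satisfies the PMP, Lemma \ref{AgPMP} shows that $A_e$ satisfies the PMP, so by Theorem \ref{PMPLF} there is a single quadruple $(c,b,a,\mu)$ — with $c \geq 0$, $b \in \Rd$, $a$ non-negative definite symmetric, and $\mu$ a L\'evy measure — for which $A_e$ has the form (\ref{distr_form}). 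Equivalently, one may apply Theorem \ref{PMP1} and then use the uniqueness of the characteristics asserted in Theorem \ref{PMPLF} together with $A_g = A_e$ to conclude that the characteristic functions are all independent of $g$; this is the step that turns the variable characteristics of (\ref{PMP2}) into the constant ones of (\ref{Huntf}).

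Next I would reconstruct $A$ from $A_e$. Using $Af(g) = A_g(L_g f) = A_e(L_g f)$ together with the left-invariance of the basis vector fields, $X_i L_g = L_g X_i$, which yields $X_i(L_g f)(e) = X_i f(g)$ and $X_iX_j(L_g f)(e) = X_iX_j f(g)$, and with $(L_g f)(e) = f(g)$, $(L_g f)(\tau) = f(g\tau)$, a substitution into (\ref{distr_form}) produces precisely (\ref{Huntf}). For the extension statement I would write $A = \mathcal{L} - c$, where $\mathcal{L}$ is obtained by deleting the term $-cf$ from (\ref{Huntf}). Then $\mathcal{L}$ has exactly the form (\ref{hu}) with the admissible characteristics $a,b,\mu$, so by the converse direction of Hunt's theorem it is the restriction to $C_0^{(2)}(G)$ of the Hunt generator of a unique convolution semigroup $(\rho_t)_{t\geq 0}$ of probability measures. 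Setting $\widetilde\rho_t := e^{-ct}\rho_t$ gives a convolution semigroup of sub-probability measures whose killed Hunt generator $\mathcal{M} = \mathcal{L} - c$ restricts to $A$ on $C_c^\infty(G) \subseteq C_0^{(2)}(G)$; note that Hunt's theorem automatically upgrades the range from $C(G)$ to $C_0(G)$. Uniqueness of $(\widetilde\rho_t)$ then follows from uniqueness of $(c,b,a,\mu)$ (Theorem \ref{PMPLF}) and of $(\rho_t)$ (Theorem \ref{Hunt}).

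I expect the only genuine subtlety, rather than a serious obstacle, to lie in the very first step: verifying cleanly that left translation invariance forces $A_g = A_e$, and hence, through the uniqueness in Theorem \ref{PMPLF}, constant characteristics. Once this collapse is in hand, both the explicit form (\ref{Huntf}) and the identification of $A$ with a killed Hunt generator are immediate consequences of results already established, the remaining work being the routine bookkeeping with left-invariant vector fields in the reconstruction and the domain matching $C_c^\infty(G) \subseteq C_0^{(2)}(G)$ in the extension.
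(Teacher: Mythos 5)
Your proposal is correct and follows essentially the same route as the paper's proof: left invariance is used to collapse the variable characteristics of Theorem \ref{PMP1} (equivalently, to reduce the family $A_g$ to the single functional $A_e$ of Theorem \ref{PMPLF}), left-invariance of the $X_i$ then yields (\ref{Huntf}), and the killed Hunt generator is produced by splitting off $-cf$ and applying the converse direction of Hunt's theorem to the remainder, with $\widetilde{\rho}_t = e^{-ct}\rho_t$. The only point of divergence is the uniqueness step: the paper invokes the fact that $C_c^{\infty}(G)$ is a core for the (killed) Hunt generator (Theorem 5.3.4 of \cite{Appbk}), whereas you obtain uniqueness from the uniqueness of the quadruple $(c,b,a,\mu)$ in Theorem \ref{PMPLF} together with the uniqueness assertion in Theorem \ref{Hunt}; both arguments are valid, yours having the small advantage of staying within results already proved in the paper.
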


\begin{proof}
	Since $A$ satisfies the PMP, by Theorem \ref{PMP1} it is of the from (\ref{PMP2}). Furthermore, $A$ is invariant under left translation on $C^\infty_c(G)$. Thus,
	\begin{align*}
	Af(g) & = AL_gf(e) = L_g Af(e) \\
	& = \sum_{i,j =1}^d a_{ij}(e)X_i X_j f(g) + \sum_{i=1}^d b_i(e)X_if(g) -c(e)f(g) \\
	& + \int_{G}\left[f(g\tau) - f(g) - \sum_{i=1}^d x_i(\tau)X_if(g) \right]\mu(e, d\tau ).
	\end{align*}
The form (\ref{Huntf}) follows when we define $a_{ij} = a_{ij}(e)$, $b_k = b_k(e), c =c(e)$ for all $i,j,k=1, \dots, d$, and $\mu(\cdot) = \mu(e, \cdot)$.
Next write (\ref{Huntf}) as $Af = -cf + Bf$. It is clear that $A$ and $B$ may be extended to linear operators $A_{1}$ and $B_{1}$ on $C_{0}^{(2)}(G)$, so that $A_{1}f = -cf + B_{1}f$ for all $f \in C_{0}^{(2)}(G)$. Then by Hunt's theorem, $B_{1}$ extends to the Hunt generator associated to a unique convolution semigroup $(\rho_{t}, t \geq 0)$ of probability measures, and then the required convolution semigroup of sub--probability measures is given, as above, by defining $\widetilde{\rho}_{t} = e^{-ct}\rho_{t}$ for each $t \geq 0$. Moreover by Theorem 5.3.4 on pp.137 of \cite{Appbk}, $C_{c}^{\infty}(G)$ is a core for the Hunt generator, and so for the killed Hunt generator, from which we see that the action of $A$ on $C_{c}^{\infty}(G)$ uniquely determines $(\widetilde{\rho}_{t}, t \geq 0)$.
\end{proof}

\section{Pseudo--Differential Operator Representation}

In this section, we assume that the conditions of Corollary \ref{nice} hold so that $A$ maps $C_{c}^{\infty}(G)$ to $C_{0}(G)$. Since any operator in $C_{0}(G)$ that satisfies the positive maximum principle is closeable (see e.g. \cite{EK} Lemma 2.11, p.16), 
$A$ has a closed extension $\overline{A}$ with Dom$(\overline{A}) \supseteq C_{c}^{\infty}(G)$.

We will also assume that $G$ is compact. It will be equipped with normalised bi--invariant Haar measure. Let $\G$ be the {\it unitary dual} of $G$, i.e. the set of all (equivalence classes with respect to unitary conjugation) of irreducible unitary representations of $G$. If $\pi \in \G$ then $\pi$ acts as unitary matrices on a complex Hilbert space $V_{\pi}$ having dimension $d_{\pi} < \infty$.  We denote the associated {\it derived representation} of $\g$, acting as skew--Hermitian matrices on $V_{\pi}$ by $d\pi$. Choosing bases, once and for all, in $V_{\pi}$, for each $\pi \in\ G$, we may define the co--ordinate functions in the usual way as $\pi_{ij}:G \rightarrow \C$, given by $\pi_{ij}(g):= \pi(g)_{ij}$ for each $1 \leq i, j \leq d_{\pi}$. These mappings are known to be $C^{\infty}$. For the purposes of this article, we will say that a linear operator $T:C^{\infty}(G) \rightarrow C(G)$ is a {\it pseudo--differential operator} if there is a mapping $j_{T}:G \times \G \rightarrow \bigcup_{\pi \in \G}M_{d_{\pi}}(\C)$ so that $j_{T}(\sigma, \pi) \in M_{d_{\pi}}(\C)$ for all $\sigma \in G, \pi \in \G$ and that for all $f \in C^{\infty}(G), \sigma \in G$,
$$ Tf(\sigma) = \sum_{\pi \in \G}d_{\pi}\tr(j_{T}(\sigma, \pi)\widehat{f}(\pi)\pi(\sigma)),$$
where the matrix $\widehat{f}(\pi) = \int_{G}f(\tau^{-1})\pi(\tau)d\tau$ is the Fourier transform of $f$. We then say that the mapping $j_{T}$ is the {\it symbol} of the operator $T$. For background on operators of this type, see \cite{RT, App2}. Our goal is now to prove that if $A:C^{\infty}(G) \rightarrow C_{0}(G)$ satisfies the PMP, then it is a pseudo--differential operator. First we find the candidate to be the symbol of such an operator.

Taking $f = \pi_{ij}$ in (\ref{PMP2}) where $\pi \in \G$, we may for each $\sigma \in G$, consider the matrix $A\pi(\sigma)$ whose $(i, j)$th entry is $A\pi_{ij}(\sigma)$, for $1 \leq i, j \leq d_{\pi}$. Define the matrix valued function $$j_{A}(\sigma, \pi): = \pi(\sigma)^{-1}A\pi(\sigma).$$ In the following, for each $\pi \in \G, I_{\pi}$ denotes the identity matrix acting in $V_{\pi}$.

\begin{prop} \label{PDo1}
For each $\pi \in \G, \sigma \in G$,
\begin{eqnarray} \label{PDo2}
j_{A}(\sigma, \pi) & = & -c(\sigma)I_{\pi} + \sum_{i=1}^{d}b_{i}(\sigma)d\pi(X_{i}) + \sum_{j,k = 1}^{d}a_{jk}(\sigma)d\pi(X_{j})d\pi(X_{k}) \nonumber \\
& + & \int_{G}\left(\pi(\tau) - I_{\pi} - \sum_{i=1}^{d}x_{i}(\tau)d\pi(X_{i})\right)\mu(\sigma, d\tau).
\end{eqnarray}

\end{prop}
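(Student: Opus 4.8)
The plan is to apply the representation (\ref{PMP2}) to each scalar matrix coefficient $f = \pi_{ij}$ and then reassemble the resulting scalar identities into a single matrix equation. This is legitimate because $G$ is compact, so that $C^{\infty}(G) = C_{c}^{\infty}(G)$ and each $\pi_{ij}$ lies in the domain of $A$. The whole point is that every term appearing on the right-hand side of (\ref{PMP2}), when applied to the matrix-valued function $\pi$, factors as a left multiplication by $\pi(\sigma)$; peeling off this factor by multiplying through by $\pi(\sigma)^{-1}$ will yield precisely (\ref{PDo2}).

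First I would record the two structural facts that drive the computation. Since the $X_{i}$ are left-invariant and $d\pi(X_{i}) = \frac{d}{dt}\big|_{t=0}\pi(\exp(tX_{i}))$, the homomorphism property $\pi(\sigma\exp(tX_{i})) = \pi(\sigma)\pi(\exp(tX_{i}))$ gives, for each $\sigma \in G$,
$$ X_{i}\pi(\sigma) = \frac{d}{dt}\Big|_{t=0}\pi(\sigma\exp(tX_{i})) = \pi(\sigma)\,d\pi(X_{i}), $$
and iterating this yields $X_{j}X_{k}\pi(\sigma) = \pi(\sigma)\,d\pi(X_{j})\,d\pi(X_{k})$, the matrices $d\pi(X_{k})$ being constant. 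Likewise $\pi(\sigma\tau) = \pi(\sigma)\pi(\tau)$ handles the integrand of the non-local term, since
$$ \pi(\sigma\tau) - \pi(\sigma) - \sum_{i=1}^{d}x_{i}(\tau)X_{i}\pi(\sigma) = \pi(\sigma)\left(\pi(\tau) - I_{\pi} - \sum_{i=1}^{d}x_{i}(\tau)\,d\pi(X_{i})\right), $$
while the zeroth order term is simply $-c(\sigma)\pi(\sigma) = \pi(\sigma)(-c(\sigma)I_{\pi})$. Substituting all of these into (\ref{PMP2}) read off entrywise, and factoring $\pi(\sigma)$ to the left, produces $A\pi(\sigma) = \pi(\sigma)\,M(\sigma, \pi)$, where $M(\sigma,\pi)$ is exactly the right-hand side of (\ref{PDo2}); left multiplication by $\pi(\sigma)^{-1} = \pi(\sigma)^{*}$ then gives the claim.

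The only step needing genuine justification is the convergence of the matrix-valued integral defining the last term of $M(\sigma,\pi)$. I expect this to be the main, though mild, obstacle, and it is handled exactly as the finiteness of the functional $S$ in the proof of Theorem \ref{PMPLF}. A second-order Taylor expansion of the smooth map $\tau \mapsto \pi(\tau)$ about $e$ shows the integrand to be $O\!\left(\sum_{i=1}^{d}x_{i}(\tau)^{2}\right)$ on the canonical neighbourhood $U$, so it is $\mu(\sigma,\cdot)$-integrable there by the first L\'evy-measure condition; on $U^{c}$ the integrand is bounded because each $\pi(\tau)$ is unitary, and $\mu(\sigma, U^{c}) < \infty$ by the second. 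With integrability in hand, the entrywise identities assemble without difficulty and (\ref{PDo2}) follows.
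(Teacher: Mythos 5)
Your proposal is correct and follows essentially the same route as the paper, whose proof consists precisely of the key identity $X\pi(\sigma) = \pi(\sigma)\,d\pi(X)$ together with the remark that the rest follows from (\ref{PMP2}) and the definition of $j_{A}$. Your additional verification that the matrix-valued integral converges (Taylor expansion on $U$, unitarity plus finiteness of $\mu(\sigma, U^{c})$ off $U$) simply fills in a detail the paper leaves as ``straightforward''.
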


\begin{proof} This follows in a straightforward manner from (\ref{PMP2}) and the definition of $j_{A}$, using standard properties of representations, and the fact that (with an obvious notation),
$$ X \pi(\sigma) = \left.\frac{d}{du}\pi(\sigma \exp(u X))\right|_{u=0} = \pi(\sigma) d\pi(X),$$
for all $X  \in \g$.
\end{proof}

We want to show that $A$ is a pseudo--differential operator with symbol $j_{A}$. Define ${\mathcal E}(G)$ to be the linear span of $S(G):= \{\sqrt{d_{\pi}}\pi_{ij}; 1 \leq i, j \leq d_{\pi}, \pi \in \G\}$. Then by the Peter--Weyl theorem, $S(G)$ is a complete orthonormal basis for $L^{2}(G)$, and ${\mathcal E}(G)$ is norm dense in $L^{2}(G)$, and uniformly dense in $C(G)$. For all $f \in {\mathcal E}(G), \sigma \in G$ we have the Fourier expansion
\begin{equation} \label{FourT}
  f(\sigma) = \sum_{\pi \in \G}d_{\pi}\tr(\widehat{f}(\pi)\pi(\sigma)),
\end{equation}
and the right hand side of (\ref{FourT}) is in fact a finite sum.

Using the definition of $j_{A}$, we obtain

\begin{eqnarray} \label{PDo3}
 Af(\sigma) & = &  \sum_{\pi \in \G}d_{\pi}\tr(\widehat{f}(\pi)A\pi(\sigma)) \nonumber \\
& = &   \sum_{\pi \in \G}d_{\pi}\tr(\widehat{f}(\pi)\pi(\sigma)j_{A}(\sigma, \pi)) \nonumber \\
& = &   \sum_{\pi \in \G}d_{\pi}\tr(j_{A}(\sigma, \pi)\widehat{f}(\pi)\pi(\sigma)).
\end{eqnarray}

We need to show that the right hand side of (\ref{PDo3}) converges to $Af(\sigma)$ for all $f \in C^{\infty}(G)$. In order to investigate this question, we will need the space ${\it D}$ of all dominant weights on  $G$ and we define ${\it D}_{0}: = D \setminus \{0\}$. Recall that these are in one--to--one correspondence with elements of $\G$, so that each $\lambda \in {\it D}$ is mapped to a unique $\pi_{\lambda} \in \G$. We will equip $\g$ with an Ad--invariant inner product, and write the associated norm as $|\cdot|$. This induces a norm on ${\it D}$ which is denoted by the same symbol. All results that follow are taken from \cite{Sug} (see also Chapter 3 of \cite{App1}). Writing $d_{\lambda}: = d_{\pi_{\lambda}}$, we have the useful estimates
\begin{equation} \label{est1}
d_{\lambda} \leq C_{1} |\lambda|^{m},
\end{equation}
where $C_{1} \geq 0$ and $m$ is the number of positive roots of $G$, and for all $X \in \g$,  there exists $C_{2} \geq 0$ so that
\begin{equation} \label{est2}
||d\pi_{\lambda}(X)||_{HS} \leq C |\lambda|^{\frac{m+2}{2}}|X|.
\end{equation}

We will also need {\it Sugiura's zeta function} $\zeta: \C \rightarrow \R \cup \{\infty\}$, defined by \begin{equation} \label{Sugz}
\zeta(s) = \ds\sum_{\lambda \in {\it D}_{0}}\frac{1}{|\lambda|^{2s}},
\end{equation}
which converges whenever $2\Re(s) > r$, where $r$ is the rank of $G$.
 Finally we will need the fact that there is a topological isomorphism between $C^{\infty}(G)$ and the Suguira space of rapidly decreasing functions on ${\it D}$. We won't need full details of this, only the following fact that, for ease of reference, we will state as a theorem:
 \begin{theorem} \label{Sugis}
The function $f \in C^{\infty}(G)$ if and only if for all $p \geq 0$,
$$\lim_{|\lambda| \rightarrow \infty}|\lambda|^{p}||\widehat{f}(\lambda)||_{HS} = 0,$$ where $\widehat{f}(\lambda):= \widehat{f}(\pi_{\lambda})$.
\end{theorem}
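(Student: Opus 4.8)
The plan is to prove both implications using the bi-invariant Casimir operator $\mathcal{L} = \sum_{i=1}^d X_i^2$, formed from the fixed Ad--invariant orthonormal basis $\{X_1,\dots,X_d\}$ of $\g$, together with the Peter--Weyl/Plancherel theory already available. The central fact I would exploit is that the matrix coefficients are eigenfunctions of $\mathcal{L}$: by Schur's lemma $d\pi_\lambda(\mathcal{L}) = \sum_i d\pi_\lambda(X_i)^2 = -c_\lambda I_{\pi_\lambda}$ for a scalar $c_\lambda \geq 0$, and using $X_i\pi_\lambda(\sigma) = \pi_\lambda(\sigma)d\pi_\lambda(X_i)$ (as in the proof of Proposition \ref{PDo1}) this gives $\mathcal{L}\pi_\lambda(\sigma) = -c_\lambda\,\pi_\lambda(\sigma)$. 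Freudenthal's formula identifies $c_\lambda = |\lambda+\rho|^2 - |\rho|^2$, with $\rho$ half the sum of the positive roots; I only need the consequent asymptotic lower bound $c_\lambda \geq \tfrac12|\lambda|^2$ for all sufficiently large $|\lambda|$, which is immediate.

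For necessity, suppose $f\in C^\infty(G)$. Since $\mathcal{L}$ is bi-invariant it commutes with inversion, so that for the transform $\widehat f(\lambda) = \int_G f(\tau^{-1})\pi_\lambda(\tau)\,d\tau$ two integrations by parts against the invariant Haar measure yield $\widehat{\mathcal{L}^N f}(\lambda) = (-c_\lambda)^N\widehat f(\lambda)$ for every $N\in\N$. As $G$ is compact, $\mathcal{L}^N f\in C(G)\subseteq L^2(G)$, so Plancherel gives $\sum_{\lambda} d_\lambda\,c_\lambda^{2N}\,\|\widehat f(\lambda)\|_{\HS}^2 = \|\mathcal{L}^N f\|_2^2 < \infty$. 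In particular each term tends to $0$, and since $d_\lambda\geq 1$ we get $c_\lambda^N\|\widehat f(\lambda)\|_{\HS}\to 0$; combined with $c_\lambda\gtrsim|\lambda|^2$ this forces $|\lambda|^{2N}\|\widehat f(\lambda)\|_{\HS}\to 0$. As $N$ is arbitrary and the quantity is monotone in the exponent, $|\lambda|^p\|\widehat f(\lambda)\|_{\HS}\to 0$ for every $p\geq 0$.

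For sufficiency, assume the rapid decay, so that $|\lambda|^q\|\widehat f(\lambda)\|_{\HS}$ is bounded for every $q\geq 0$. I would show that the Fourier series (\ref{FourT}) and each of its termwise derivatives converge uniformly, whence $f$ (identified with its everywhere-defined continuous sum) is smooth. For a product $X^\alpha = X_{i_1}\cdots X_{i_k}$ one has $X^\alpha\pi_\lambda(\sigma) = \pi_\lambda(\sigma)\,d\pi_\lambda(X_{i_1})\cdots d\pi_\lambda(X_{i_k})$, so the general term of the differentiated series is bounded, using $|\tr(MN)|\leq\|M\|_{\HS}\|N\|_{\HS}$, submultiplicativity of the operator norm, $\|\pi_\lambda(\sigma)\|_{\op}=1$, and the estimates (\ref{est1})--(\ref{est2}), by a constant multiple of $|\lambda|^{m + k(m+2)/2}\|\widehat f(\lambda)\|_{\HS}$. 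Factoring off $|\lambda|^{-s}$ with $s>r$ and writing the remainder as $|\lambda|^{m+k(m+2)/2+s}\|\widehat f(\lambda)\|_{\HS}$ (bounded by hypothesis), the series is dominated by $\big(\sup_\lambda|\lambda|^{m+k(m+2)/2+s}\|\widehat f(\lambda)\|_{\HS}\big)\sum_{\lambda\in D_0}|\lambda|^{-s}$, and the last sum converges since it is essentially Sugiura's zeta function (\ref{Sugz}), which converges for $s>r$. This bound is independent of $\sigma$, so convergence is uniform; the finitely many $\lambda$ with small $|\lambda|$ contribute harmlessly. Letting $k=|\alpha|$ range over all values yields $f\in C^\infty(G)$.

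The main obstacle is the necessity direction, and specifically the need for the lower bound $c_\lambda\gtrsim|\lambda|^2$: the given estimates (\ref{est1})--(\ref{est2}) are upper bounds on $d_\lambda$ and on $\|d\pi_\lambda(X)\|$ and are useless for bounding $\|\widehat f(\lambda)\|_{\HS}$ from above, so one really must pass through a scalar operator. The Casimir is the natural choice because Schur's lemma collapses $d\pi_\lambda(\mathcal{L})$ to the scalar $-c_\lambda$, which can then be divided out; the supporting technical points are the correct intertwining of $\mathcal{L}^N$ with the transform (taking due care of the $\tau^{-1}$ in the convention) and the quadratic growth of $c_\lambda$ via Freudenthal's formula. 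Sufficiency, by contrast, is a routine if bookkeeping-heavy application of the estimates already recorded together with the convergence of Sugiura's zeta function.
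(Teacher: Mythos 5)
The paper does not prove this statement at all: it is imported, explicitly ``for ease of reference,'' from Sugiura \cite{Sug} (see also Chapter 3 of \cite{App1}), so there is no internal proof to compare yours against. Your argument is correct, and it is in substance a reconstruction of the proof in the cited literature, which likewise passes through the Casimir operator: Schur's lemma gives $\sum_{i}d\pi_{\lambda}(X_{i})^{2}=-c_{\lambda}I_{\pi_{\lambda}}$, Freudenthal's formula gives $c_{\lambda}=|\lambda+\rho|^{2}-|\rho|^{2}\geq |\lambda|^{2}$ (since $\langle \lambda,\rho\rangle \geq 0$ for dominant $\lambda$, so your weaker bound $c_{\lambda}\geq \tfrac{1}{2}|\lambda|^{2}$ is safe), Plancherel plus the intertwining $\widehat{\mathcal{L}^{N}f}(\lambda)=(-c_{\lambda})^{N}\widehat{f}(\lambda)$ yields necessity, and domination of the termwise--differentiated Fourier series by a convergent instance of Sugiura's zeta function (\ref{Sugz}) yields sufficiency. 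Your diagnosis of the crux is also right: the upper bounds (\ref{est1})--(\ref{est2}) cannot control $||\widehat{f}(\lambda)||_{HS}$, so one must divide out a scalar eigenvalue, and the Casimir is exactly the device that makes this possible; the care you take with the $\tau^{-1}$ in the transform (bi--invariance of $\mathcal{L}$, hence commutation with inversion, plus symmetry of $\mathcal{L}$ with respect to Haar measure) is precisely where the convention matters. Two small points should be made explicit. First, Schur's lemma applies only if $\{X_{1},\ldots,X_{d}\}$ is orthonormal for the Ad--invariant inner product; the basis fixed at the start of the paper is not assumed to be, but since the statement of the theorem is independent of the basis you may pass to an orthonormal one without loss of generality. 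Second, in the sufficiency direction the hypothesis only presents $f$ as, say, an $L^{2}$ function, so the correct conclusion is that the uniformly convergent series defines a smooth function with the same Fourier coefficients, and hence $f$ agrees almost everywhere with a smooth function; you flag this identification, which is the intended reading of the statement.
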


\begin{theorem} \label{niceest} There exist $K = K(|X_{1}|, \ldots, |X_{d}|) > 0$ so that for all $\lambda \in D$,
$$ \sup_{\sigma \in G}||j_{A}(\sigma, \lambda )||_{HS} \leq K(1 + |\lambda|^{m+2}),$$
where $j_{A}(\cdot, \lambda):= j_{A}(\cdot, \pi_{\lambda})$.
\end{theorem}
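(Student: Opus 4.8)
The plan is to bound the Hilbert--Schmidt norm of each of the four terms in the explicit formula (\ref{PDo2}) for $j_{A}(\sigma, \pi_{\lambda})$ separately, uniformly in $\sigma$, using the representation-theoretic estimates (\ref{est1}) and (\ref{est2}) together with the continuity/boundedness hypotheses that are in force (conditions (i) and (ii) of Theorem \ref{contA}, which we are assuming via Corollary \ref{nice}). Since $\|\cdot\|_{HS}$ is subadditive, it suffices to produce a bound of the stated shape for each summand and then collect constants.

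First I would handle the differential terms. The zeroth-order term contributes $\|{-}c(\sigma)I_{\pi_{\lambda}}\|_{HS} = |c(\sigma)|\sqrt{d_{\lambda}}$, which by (\ref{est1}) is $O(|\lambda|^{m/2})$ once we use that $c$ is continuous on the compact group $G$, hence bounded. The first-order term $\sum_{i}b_{i}(\sigma)d\pi_{\lambda}(X_{i})$ is bounded in $\|\cdot\|_{HS}$, using the triangle inequality, boundedness of the continuous $b_{i}$ on $G$, and (\ref{est2}), by a constant multiple of $|\lambda|^{(m+2)/2}$. The second-order term $\sum_{j,k}a_{jk}(\sigma)d\pi_{\lambda}(X_{j})d\pi_{\lambda}(X_{k})$ requires submultiplicativity $\|MN\|_{HS}\le\|M\|_{\op}\|N\|_{HS}\le\|M\|_{HS}\|N\|_{HS}$; applying (\ref{est2}) to both factors gives a bound of order $|\lambda|^{m+2}$, which is exactly the dominant power appearing in the statement. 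Here the constant absorbs $\sup_{\sigma}|a_{jk}(\sigma)|$ and the factors $|X_{j}|$, justifying the dependence $K = K(|X_{1}|,\dots,|X_{d}|)$.

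The main obstacle is the integral term $\int_{G}\bigl(\pi_{\lambda}(\tau) - I_{\pi_{\lambda}} - \sum_{i}x_{i}(\tau)d\pi_{\lambda}(X_{i})\bigr)\mu(\sigma, d\tau)$, because the integrand must be controlled over all of $G$ against a L\'evy kernel that is only $\sigma$-finite near $e$. I would split the integral over $U$ and $U^{c}$. On $U^{c}$ the integrand is bounded by $\|\pi_{\lambda}(\tau)\|_{HS} + \|I_{\pi_{\lambda}}\|_{HS} + \sum_{i}|x_{i}(\tau)|\,\|d\pi_{\lambda}(X_{i})\|_{HS}$, and since $\pi_{\lambda}(\tau)$ is unitary we have $\|\pi_{\lambda}(\tau)\|_{HS} = \sqrt{d_{\lambda}}$; multiplying by the finite, $\sigma$-uniformly bounded mass $\mu(\sigma, U^{c})$ (bounded by assumption (ii) applied to $k = \mathbf{1}_{U^{c}}$, as in the proof of Theorem \ref{contA}) gives a contribution of order $|\lambda|^{(m+2)/2}$. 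On $U$ one cannot use the crude bound, and the delicate point is that the three-term bracket is the second-order Taylor remainder of $\tau \mapsto \pi_{\lambda}(\tau)$ at $e$, so by Taylor's theorem on Lie groups (as used in Theorem \ref{order2}) it is pointwise bounded by $\tfrac{1}{2}\bigl(\sum_{i}x_{i}(\tau)^{2}\bigr)\sup_{v\in G}\|X_{i}X_{j}\pi_{\lambda}(v)\|_{HS}$; the second derivatives of the matrix coefficients are again controlled via $X_{i}X_{j}\pi_{\lambda}(v) = \pi_{\lambda}(v)d\pi_{\lambda}(X_{i})d\pi_{\lambda}(X_{j})$, whose $\|\cdot\|_{HS}$ is $O(|\lambda|^{m+2})$ by unitarity of $\pi_{\lambda}(v)$ and two applications of (\ref{est2}). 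Integrating this against $\mu(\sigma, d\tau)$ over $U$ uses precisely the L\'evy condition $\int_{U}\sum_{i}x_{i}(\tau)^{2}\,\mu(\sigma, d\tau) < \infty$ and its $\sigma$-uniform boundedness (assumption (ii) with $h = \mathbf{1}_{U}$), yielding a further term of order $|\lambda|^{m+2}$. Collecting all five contributions, the highest power present is $|\lambda|^{m+2}$, and writing the bound as $K(1 + |\lambda|^{m+2})$ absorbs the lower-order powers and the additive constant from the unit-mass estimates, completing the proof.
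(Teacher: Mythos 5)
Your proof is correct and takes essentially the same route as the paper's: a term-by-term Hilbert--Schmidt estimate of the four summands in (\ref{PDo2}) via (\ref{est1}) and (\ref{est2}), a $U$/$U^{c}$ splitting of the L\'evy integral with a second-order Taylor bound near $e$ and the crude unitarity bound $\|\pi_{\lambda}(\tau)\|_{HS}=\sqrt{d_{\lambda}}$ off $U$, and compactness of $G$ together with conditions (i)--(ii) to get uniformity in $\sigma$. The only differences are cosmetic (and if anything slightly cleaner on your side): you bound the second-order term by the dimension-free submultiplicativity $\|MN\|_{HS}\le\|M\|_{op}\|N\|_{HS}$ where the paper invokes equivalence of norms, and you phrase the near-$e$ remainder via Taylor's theorem on the group rather than the matrix-exponential expansion of $\pi_{\lambda}\left(\exp\left(\sum_{i}x_{i}(\tau)X_{i}\right)\right)$.
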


\begin{proof} Throughout this proof $K_{1}, K_{2}, \ldots $ are non--negative constants. We examine each of the four terms on the right hand side of (\ref{PDo2}) in turn. For the first of these, we use (\ref{est1}) to obtain for all $\lambda \in D$,
$$ ||I_{\pi_{\lambda}}||_{HS} = d_{\lambda}^{\frac{1}{2}} \leq K_{1}|\lambda|^{m/2}.$$
For the second term we apply (\ref{est2}). For the third term, we employ the equivalence of norms in finite--dimensional vector spaces to find that for all $X, Y \in \g$,
\bean ||d\pi_{\lambda}(X)d\pi_{\lambda}(Y)||_{HS} & \leq & K_{2} ||d\pi_{\lambda}(X)d\pi_{\lambda}(Y)||_{op}\\
& \leq & K_{2}||d\pi_{\lambda}(X)||_{op}||d\pi_{\lambda}(Y)||_{op}\\
& \leq & K_{3}||d\pi_{\lambda}(X)||_{HS}||d\pi_{\lambda}(Y)||_{HS}\\
& \leq & K_{4}|\lambda|^{m+2}|X||Y|. \eean
For the term controlled by $\mu$, let $U$ be a canonical neighbourhood of $e$ for which $x_{1}, \ldots, x_{d}$ are canonical co--ordinates. Then for each $\tau \in U$,
$$ \pi_{\lambda}(\tau) = \pi_{\lambda}\left(\exp{\left(\sum_{i=1}^{d}x_{i}(\tau)X_{i}\right)}\right) = \exp{\left(\sum_{i=1}^{d}x_{i}(\tau)d\pi_{\lambda}(X_{i})\right)},$$
where the final $\exp$ denotes the matrix exponential. Then by a standard Taylor series argument we have,
\bean & & \sup_{\sigma \in G}\left|\left|\int_{U}\left(\pi_{\lambda}(\tau) - I_{\pi_{\lambda}} - \sum_{i=1}^{d}x_{i}(\tau)d\pi_{\lambda}(X_{i})\right)\mu(\sigma, d\tau)\right|\right|_{HS} \\
& \leq & \sup_{\sigma \in G}\int_{U}\left|\left|\exp{\left(\sum_{i=1}^{d}x_{i}(\tau)d\pi_{\lambda}(X_{i})\right)} - I_{\pi_{\lambda}} - \sum_{i=1}^{d}x_{i}(\tau)d\pi_{\lambda}(X_{i})\right|\right|_{HS}\mu(\sigma, d\tau)\\
& \leq & \sup_{\sigma \in G}\int_{U}\sum_{i,j=1}^{d}x_{i}(\tau)x_{j}(\tau)||d\pi_{\lambda}(X_{i})d\pi_{\lambda}(X_{j})||_{HS}\mu(\sigma, d\tau)\\
& \leq & K_{5}\max\{|X_{1}|^{2}, \ldots, |X_{d}|^{2}\}|\lambda|^{m+2}\sup_{\sigma \in G}\int_{U}\sum_{i=1}^{d}x_{i}(\tau)^{2}\mu(\sigma, d\tau)\\
& \leq & K_{6}|\lambda|^{m+2}. \eean

Finally we have
\bean & & \sup_{\sigma \in G}\left|\left|\int_{U^{c}}\left(\pi_{\lambda}(\tau) - I_{\pi_{\lambda}} - \sum_{i=1}^{d}x_{i}(\tau)d\pi_{\lambda}(X_{i})\right)\mu(\sigma, d\tau)\right|\right|_{HS} \\
& \leq & \sup_{\sigma \in G}\int_{U^{c}}\left(2 d_{\lambda}^{\frac{1}{2}} + \sum_{i=1}^{d}|x_{i}(\tau)|||d\pi_{\lambda}(X_{i})||_{HS}\right)\mu(\sigma, d\tau)\\
& \leq & (2 d_{\lambda}^{\frac{1}{2}} + K_{7}\max_{i = 1, \ldots, d}\{||d\pi_{\lambda}(X_{i})||_{HS}\})\sup_{\sigma \in G}\mu(\sigma, U^{c})\\
& \leq & K_{8}(|\lambda|^{m/2} + |\lambda|^{(m+2)/2}), \eean
where we used the continuity of the map $g \rightarrow \mu(g, U^{c})$, as discussed after the proof of Theorem \ref{contA}.

The result follows on combining together all these estimates, bearing in mind that the functions $c, b_{i}$ and $a_{jk}$, for $1 \leq i,j,k \leq d$, are all bounded on $G$.

\end{proof}

\begin{lemma} \label{abuncon}
If $f \in C^{\infty}(G)$ then the series $\sum_{\lambda \in D}d_{\lambda}\tr(j_{A}(\sigma, \pi)\widehat{f}(\lambda)\pi_{\lambda}(\sigma))$ converges absolutely and uniformly in $\sigma \in G$.
\end{lemma}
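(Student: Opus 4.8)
The plan is to show absolute and uniform convergence by dominating the general term of the series by a summable sequence that is independent of $\sigma$. The key ingredients are already in place: Theorem \ref{niceest} provides the uniform-in-$\sigma$ bound $\sup_{\sigma \in G}||j_{A}(\sigma, \lambda)||_{HS} \leq K(1 + |\lambda|^{m+2})$, Theorem \ref{Sugis} characterises $f \in C^{\infty}(G)$ by rapid decay of $||\widehat{f}(\lambda)||_{HS}$, and the estimate (\ref{est1}) controls $d_{\lambda}$, while Sugiura's zeta function (\ref{Sugz}) supplies the summability criterion.

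First I would estimate a single term of the series pointwise in $\sigma$. Using the submultiplicativity of the trace norm together with equivalence of matrix norms on the finite-dimensional space $M_{d_{\lambda}}(\C)$, I would bound
$$ |\tr(j_{A}(\sigma, \pi_{\lambda})\widehat{f}(\lambda)\pi_{\lambda}(\sigma))| \leq d_{\lambda}^{1/2}||j_{A}(\sigma, \pi_{\lambda})||_{HS}\,||\widehat{f}(\lambda)||_{HS}\,||\pi_{\lambda}(\sigma)||_{op},$$
where the factor $d_{\lambda}^{1/2}$ arises from passing between the operator and Hilbert--Schmidt norms of the trace, and $||\pi_{\lambda}(\sigma)||_{op} = 1$ since $\pi_{\lambda}(\sigma)$ is unitary. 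Multiplying by the prefactor $d_{\lambda}$ and taking the supremum over $\sigma \in G$, I would apply Theorem \ref{niceest} and (\ref{est1}) to obtain a bound of the shape
$$ \sup_{\sigma \in G}d_{\lambda}|\tr(j_{A}(\sigma, \pi_{\lambda})\widehat{f}(\lambda)\pi_{\lambda}(\sigma))| \leq K'\,|\lambda|^{3m/2 + 2}\,||\widehat{f}(\lambda)||_{HS},$$
for a constant $K'$ independent of $\sigma$ and $\lambda$.

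Next I would invoke the Sugiura characterisation, Theorem \ref{Sugis}: since $f \in C^{\infty}(G)$, for every $p \geq 0$ the quantity $|\lambda|^{p}||\widehat{f}(\lambda)||_{HS} \to 0$ as $|\lambda| \to \infty$. Choosing $p$ large enough, say $p = 3m/2 + 2 + 2s$ with $2s > r$ (the rank of $G$), I can write $|\lambda|^{3m/2+2}||\widehat{f}(\lambda)||_{HS} \leq C_{f}\,|\lambda|^{-2s}$ for all $\lambda \in D_{0}$ and some constant $C_{f}$, since the left side decays faster than any prescribed power. Summing over $\lambda \in D_{0}$ then yields $\sum_{\lambda \in D_{0}} C_{f}|\lambda|^{-2s} = C_{f}\,\zeta(s) < \infty$ by the convergence of Sugiura's zeta function (\ref{Sugz}) for $2\Re(s) > r$; the single $\lambda = 0$ term is handled separately and contributes a finite quantity. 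Since the dominating series has nonnegative terms independent of $\sigma$ and is finite, the Weierstrass $M$-test gives absolute and uniform convergence in $\sigma \in G$, completing the argument.

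I do not anticipate a serious obstacle here, as this is essentially a bookkeeping exercise assembling the earlier estimates. The only point requiring a little care is the correct accounting of the powers of $|\lambda|$: each of the factor $d_{\lambda}$, the norm-conversion factor $d_{\lambda}^{1/2}$, and the bound on $||j_{A}||_{HS}$ contributes, and one must verify that the aggregate polynomial growth is overwhelmed by the rapid decay of $||\widehat{f}(\lambda)||_{HS}$ guaranteed by Theorem \ref{Sugis}. Since that decay beats every polynomial, the precise exponent is immaterial to convergence, so the estimate is robust.
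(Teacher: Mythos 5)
Your proposal is correct and follows essentially the same route as the paper: a term-by-term bound combining a trace--Hilbert--Schmidt inequality with Theorem \ref{niceest} and (\ref{est1}), then Sugiura rapid decay (Theorem \ref{Sugis}) and convergence of the zeta function (\ref{Sugz}), concluding by the Weierstrass $M$-test. The only differences are cosmetic: the paper uses the sharper estimate $|\tr(AB)| \leq \|A\|_{HS}\|B\|_{HS}$, which avoids your extra factor $d_{\lambda}^{1/2}$, and on your own accounting the exponent should be $5m/2+2$ rather than $3m/2+2$ --- but, as you correctly note, the precise polynomial power is immaterial since the decay of $\|\widehat{f}(\lambda)\|_{HS}$ beats every polynomial.
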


\begin{proof} Using (\ref{est1}), Theorem \ref{niceest}, and the basic matrix estimate $|\tr(AB)| \leq ||A||_{HS}||B||_{HS}$, we have
\bean & &   \sum_{\lambda \in D}d_{\lambda}|\tr(j_{A}(\sigma, \pi)\widehat{f}(\lambda)\pi_{\lambda}(\sigma))| \\
& \leq & C_{1}K \sum_{\lambda \in D} |\lambda|^{m}(1 + |\lambda|^{m+2})||\widehat{f}(\lambda)||_{HS}. \eean

By Theorem \ref{Sugis}, we can find $\lambda_{0} \in D_{0}$ so that for all $|\lambda| > |\lambda_{0}|$,
$$   ||\widehat{f}(\lambda)||_{HS} \leq \frac{B_{1}}{|\lambda|^{2s}},$$
where $2s > 2(m + 1) +r$ and $B_{1} > 0$. Then there exists $B_{2} > 0$ so that
$$ \sup_{\sigma \in G}\sum_{|\lambda| > |\lambda_{0}|}d_{\lambda}|\tr(j_{A}(\sigma, \pi)\widehat{f}(\lambda)\pi_{\lambda}(\sigma))| \leq B_{2}\zeta(s - m-1) < \infty.$$
\end{proof}

\begin{theorem} The operator $A: C^{\infty}(G) \rightarrow C(G)$ is a pseudo--differential operator with symbol $j_{A}$.
\end{theorem}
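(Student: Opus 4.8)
The plan is to leverage the two facts already in hand: the identity (\ref{PDo3}) holds for every $f$ in the dense subspace $\mathcal{E}(G)$, where both sides reduce to finite sums, and by Lemma \ref{abuncon} the right--hand side of (\ref{PDo3}) converges absolutely and uniformly in $\sigma$ for every $f \in C^{\infty}(G)$. Since $G$ is compact we have $C_c^{\infty}(G) = C^{\infty}(G)$, so it suffices to approximate an arbitrary $f \in C^{\infty}(G)$ by elements of $\mathcal{E}(G)$ in a manner strong enough that both sides of (\ref{PDo3}) pass to the limit.

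For this I would take the Fourier partial sums $f_N(\sigma) := \sum_{|\lambda| \leq N} d_{\lambda}\tr(\widehat{f}(\lambda)\pi_{\lambda}(\sigma))$, which lie in $\mathcal{E}(G)$ and for which (\ref{PDo3}) reads $Af_N(\sigma) = \sum_{|\lambda| \leq N} d_{\lambda}\tr(j_A(\sigma,\lambda)\widehat{f}(\lambda)\pi_{\lambda}(\sigma))$. The first step is to show $f_N \to f$ in the Fr\'{e}chet topology of $C^{\infty}(G)$, i.e. that $X^{\alpha}f_N \to X^{\alpha}f$ uniformly for every multi--index $\alpha$. Since $X^{\alpha}\pi_{\lambda}(\sigma) = \pi_{\lambda}(\sigma)\,d\pi_{\lambda}(X_{i_1})\cdots d\pi_{\lambda}(X_{i_{|\alpha|}})$ and the Hilbert--Schmidt norm is unitarily invariant, the estimates (\ref{est1}) and (\ref{est2}) bound $\|X^{\alpha}\pi_{\lambda}(\sigma)\|_{\HS}$ by a fixed power of $|\lambda|$ independent of $\sigma$; combined with the rapid decay of $\|\widehat{f}(\lambda)\|_{\HS}$ supplied by Theorem \ref{Sugis}, the tail sums over $|\lambda| > N$ vanish uniformly in $\sigma$.

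The crux is then to deduce $Af_N \to Af$ uniformly from $f_N \to f$ in $C^{\infty}(G)$. For the differential part $A_1$ this is immediate, since $c, b_i, a_{jk}$ are bounded on the compact group $G$ while $f_N, X_i f_N$ and $X_j X_k f_N$ converge uniformly. The main obstacle is the non--local part $A_2 + A_3 = \int_G Hf(\sigma,\tau)\,\mu(\sigma, d\tau)$, where one must justify passage to the limit under the integral uniformly in $\sigma$. On $U$ I would apply Taylor's theorem to bound $|H(f_N - f)(\sigma,\tau)|$ by $\tfrac{1}{2}\sum_{i,j}|x_i(\tau)x_j(\tau)|\,\sup_v |X_i X_j(f_N - f)(v)|$ and invoke $\sup_{\sigma}\int_U \sum_i x_i(\tau)^2\,\mu(\sigma, d\tau) < \infty$, which follows from condition (ii) together with compactness of $G$; on $U^c$, since the coordinate functions $x_i$ are globally bounded, $|H(f_N - f)(\sigma,\tau)|$ is controlled by $\|f_N - f\|_{\infty}$ and the $\|X_i(f_N - f)\|_{\infty}$ together with the uniform bound $\sup_{\sigma}\mu(\sigma, U^c) < \infty$. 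In both regimes the $C^{\infty}$--convergence drives the integrals to zero uniformly in $\sigma$.

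Finally I would assemble the pieces: for each $\sigma \in G$,
\begin{equation*}
Af(\sigma) = \lim_{N\to\infty} Af_N(\sigma) = \lim_{N\to\infty}\sum_{|\lambda|\leq N} d_{\lambda}\tr(j_A(\sigma,\lambda)\widehat{f}(\lambda)\pi_{\lambda}(\sigma)) = \sum_{\lambda\in D} d_{\lambda}\tr(j_A(\sigma,\lambda)\widehat{f}(\lambda)\pi_{\lambda}(\sigma)),
\end{equation*}
where the first equality is the uniform convergence $Af_N \to Af$ just established, and the last is exactly the absolute and uniform convergence furnished by Lemma \ref{abuncon}. Recalling that $D$ is in bijection with $\G$ via $\lambda \mapsto \pi_{\lambda}$, this is precisely the statement that $A$ is a pseudo--differential operator with symbol $j_A$.
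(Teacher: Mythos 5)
Your proof is correct, but at the decisive step it takes a genuinely different route from the paper. Both arguments begin the same way: the identity (\ref{PDo3}) on ${\mathcal E}(G)$, the Fourier partial sums $f_{N} \in {\mathcal E}(G)$, and Lemma \ref{abuncon} to identify the limit of the right--hand sides. The paper then finishes in one line: since $f_{N} \rightarrow f$ uniformly and $Af_{N}$ converges uniformly, the closedness of $\overline{A}$ --- set up at the start of Section 5 via the Ethier--Kurtz lemma, which guarantees that any operator on $C_{0}(G)$ satisfying the PMP is closeable --- forces $Af = \lim_{N} Af_{N}$. You avoid closability entirely and instead prove that $A$ is continuous from $C^{\infty}(G)$ with its Fr\'{e}chet topology to $(C(G), \|\cdot\|_{\infty})$: you upgrade $f_{N} \rightarrow f$ to convergence of all derivatives (using (\ref{est1}), (\ref{est2}) and the rapid decay of Theorem \ref{Sugis}), and then push this through each piece $A_{1}, A_{2}, A_{3}$ of the L\'{e}vy--type form (\ref{PMP2}), in effect re--running the estimates behind Theorem \ref{contA}: boundedness of $c, b_{i}, a_{jk}$ on the compact group, Taylor's theorem together with $\sup_{\sigma \in G}\int_{U}\sum_{i=1}^{d}x_{i}(\tau)^{2}\mu(\sigma, d\tau) < \infty$ on $U$, and global boundedness of the $x_{i}$ together with $\sup_{\sigma \in G}\mu(\sigma, U^{c}) < \infty$ on $U^{c}$, both suprema finite by condition (ii) and compactness. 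What each approach buys: the paper's is shorter, with all the analytic work absorbed into the abstract closability lemma and only uniform convergence of $f_{N}$ required; yours is longer but more self--contained and quantitative, needing neither the closed extension $\overline{A}$ nor the PMP--implies--closable fact, and yielding as a by--product the Fr\'{e}chet--to--uniform continuity of $A$, which is of independent interest. One small point you should make explicit: to identify $\lim_{N} X^{\alpha}f_{N}$ with $X^{\alpha}f$ (rather than merely showing the differentiated series converges), either invoke the standard lemma on interchanging limits and derivatives, or observe that left--invariant derivatives preserve the $\pi_{\lambda}$--isotypic components, so that $X^{\alpha}f_{N}$ is precisely the $N$--th Fourier partial sum of $X^{\alpha}f$, to which the uniform convergence theorem for smooth functions applies directly.
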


\begin{proof}We must show that for all $f \in C^{\infty}(G), \sigma \in G$,
$$ Af(\sigma) = \sum_{\pi \in \G}d_{\pi}\tr(j_{A}(\sigma, \pi)\widehat{f}(\pi)\pi(\sigma)).$$
We have shown that this holds for $f \in {\mathcal E}(G)$. Now let $f \in C^{\infty}(G)$. By Theorem 3.3.1 (ii) of \cite{App1}, the Fourier series $\sum_{\lambda \in D}d_{\lambda}\tr(\widehat{f}(\lambda)\pi_{\lambda}(\sigma))$ converges absolutely and uniformly to $f(\sigma)$. Hence (by ordering the weights in $D$ e.g. by increasing norm, with arbitrary ordering for those having the same norm) we obtain a sequence $(f_{n})$ of partial sums in ${\mathcal E}(G)$ that converges uniformly to $f$. By lemma \ref{abuncon}, we have the uniform convergence, $$\lim_{n \rightarrow \infty}Af_{n}(\sigma) = \sum_{\pi \in \G}d_{\pi}\tr(j_{A}(\sigma, \pi)\widehat{f}(\pi)\pi(\sigma)),$$ and the result follows since $\overline{A}$ is closed.
\end{proof}

For examples of symbols associated to classes of Feller processes on compact Lie groups, see section 5 of \cite{App2}. We note that these include processes obtained by solving stochastic differential equations driven by L\'{e}vy processes, processes obtained by subordination, and a Lie group generalisation of Feller's pseudo--Poisson process. A simpler class of examples is obtained by taking $A$ to be the killed Hunt generator of Theorem \ref{Hu1}. Here the symbol (as we expect) is independent of $\sigma \in G$, and so only a function of $\pi \in \G$. Using (\ref{Huntf}) we easily compute
\bean j_{A}(\pi) & = & -cI_{\pi} + \sum_{i=1}^{d}b_{i}d\pi(X_{i}) + \frac{1}{2}\sum_{i,j=1}^{d}a_{ij}d\pi(X_{i})d\pi(X_{j})\\
& = & \int_{G}\left(\pi(\tau) - I_{\pi} - \sum_{i=1}^{d}x_{i}(\tau)d\pi(X_{i})\right)\mu(d\tau).
\eean
If we put $c=0$ we obtain the formula that was found for symbols of Hunt semigroups in \cite{App3}.

Using techniques developed in \cite{FR}, we conjecture that it should also be possible to represent operators satisfying the PMP as pseudo--differential operators when the group is homogeneous.

\vspace{5pt}

Acknowledgements. We are grateful to Ming Liao, and also to the referees, for helpful comments that have improved the quality of the paper.

\end{document}